\documentclass[reqno,twoside]{article}

\usepackage{amssymb}
\usepackage{amsmath}
\usepackage{amsthm}
\usepackage{letterswitharrows}
\usepackage{mathrsfs}
\usepackage{graphicx}
\usepackage{latexsym}
\usepackage{stmaryrd}
\usepackage{enumitem}
\setlist{topsep=0.3em, itemsep=-0.3em}
\usepackage{moreenum}
\usepackage{authblk}
\usepackage[bookmarks,hyperfootnotes=false, psdextra=true]{hyperref}
\usepackage{multirow}
\usepackage[dvipsnames]{xcolor}
\usepackage{caption}
\colorlet{darkishRed}{red!60!black}
\colorlet{darkishBlue}{blue!60!black}
\colorlet{darkishGreen}{green!50!black}
\colorlet{lightishGreen}{green!70!black}
\hypersetup{
    draft = false,
    bookmarksopen=true,
    colorlinks,
    linkcolor={darkishBlue},
    citecolor={lightishGreen},
    urlcolor={darkishBlue}
}
\usepackage[nameinlink, capitalise, noabbrev]{cleveref}
\usepackage{thmtools}
\usepackage{nameref}
\crefformat{enumi}{#2#1#3}
\crefformat{equation}{#2(#1)#3}
\crefrangeformat{section}{Sections~#3#1#4--#5#2#6}
\crefname{mainresult}{Theorem}{Theorems}
\let\setminus=\smallsetminus
\usepackage{tikz}
\usepackage{tikz-cd}
\usetikzlibrary{calc,through,intersections,arrows, trees, positioning, decorations.pathmorphing, cd}
\usepackage{comment}
\usepackage{mathtools}
\usepackage{nccmath}
\usepackage{pifont}
\usepackage[utf8]{inputenc}
\usepackage[T1]{fontenc}
\usepackage{lmodern}
\usepackage[babel]{microtype}
\usepackage[english]{babel}
\usepackage{relsize}

\usepackage{caption}

\usepackage{geometry}
\let\setminus=\smallsetminus
\renewcommand{\leq}{\leqslant}
\renewcommand{\geq}{\geqslant}

\let\rho=\varrho
\let\phi=\varphi

\newcommand{ \N } { \mathbb{N} }
\newcommand{ \Z } { \mathbb{Z} }

\newcommand{\defn}[1]{{\color{darkishRed}{\emph{#1}}}}
\newcommand{\defnm}[1]{{\color{darkishRed}{#1}}}

\makeatletter

\def\calCommandfactory#1{%
   \expandafter\def\csname c#1\endcsname{\mathcal{#1}}}
\def\frakCommandfactory#1{%
   \expandafter\def\csname frak#1\endcsname{\mathfrak{#1}}}

\newcounter{ctr}
\loop
  \stepcounter{ctr}
  \edef\X{\@Alph\c@ctr}
  \expandafter\calCommandfactory\X
  \expandafter\frakCommandfactory\X
  \edef\Y{\@alph\c@ctr}
  \expandafter\frakCommandfactory\Y
\ifnum\thectr<26
\repeat

\setenumerate{label={\normalfont (\roman*)}}

\newtheorem{theorem}{Theorem}[section] 
\newtheorem{proposition}[theorem]{Proposition}
\newtheorem{corollary}[theorem]{Corollary}
\newtheorem{lemma}[theorem]{Lemma}

\newtheorem{conjecture}[theorem]{Conjecture}

\newtheorem{mainresult}{Theorem}
\newtheorem{maincorollary}[mainresult]{Corollary}
\newtheorem{mainconjecture}[mainresult]{Conjecture}

\newtheorem{claim}{Claim}
\crefname{claim}{Claim}{Claims}
\AtEndEnvironment{proof}{\setcounter{claim}{0}}

\newenvironment{claimproof}{\noindent\textit{Proof.}}{\hfill\ensuremath{\blacksquare}\medskip}
\usepackage{etoolbox}

\newtheorem{innercustomthm}{}

\newenvironment{customthm}[1]
  {\renewcommand\theinnercustomthm{#1}\innercustomthm}
  {\endinnercustomthm}

\theoremstyle{definition}

\theoremstyle{remark}

\newcommand{\labtequtag}[3]{%
 \begin{equation} \label{#1} 	\begin{minipage}[c]{0.9\textwidth}  #2 \end{minipage} \ignorespacesafterend \tag{#3} \end{equation} }

\usepackage{etoolbox}
\newbool{pdfBool}
\booltrue{pdfBool} 

\newcommand{\pdfOrNot}[2]{\ifbool{pdfBool}{{#1}}{{#2}}}

\usepackage{svg}

\newbool{arXiv}
\boolfalse{arXiv} 

\newcommand{\arXivOrNot}[2]{\ifbool{arXiv}{{#1}}{{#2}}}

\usepackage{csquotes}
\usepackage[backend=biber, style=alphabetic, sorting=nyt, maxnames=99, maxalphanames=99, giveninits=true, sortcites=true]{biblatex}
\usepackage[skip=6pt]{subcaption} 
\let\tilde=\widetilde

\DeclareMathOperator{\kappatwo}{\left\lfloor \frac{\kappa -- 2}{2}\right\rfloor}

\usepackage{authblk}

\usepackage{fancyhdr}
\renewcommand\footnotemark{}

\begin{document}

	\title{A coarse Menger theorem for hyperbolic graphs, finitely presented groups, and more}

	\author{Sandra Albrechtsen$^{{\includegraphics[height=.6\baselineskip]{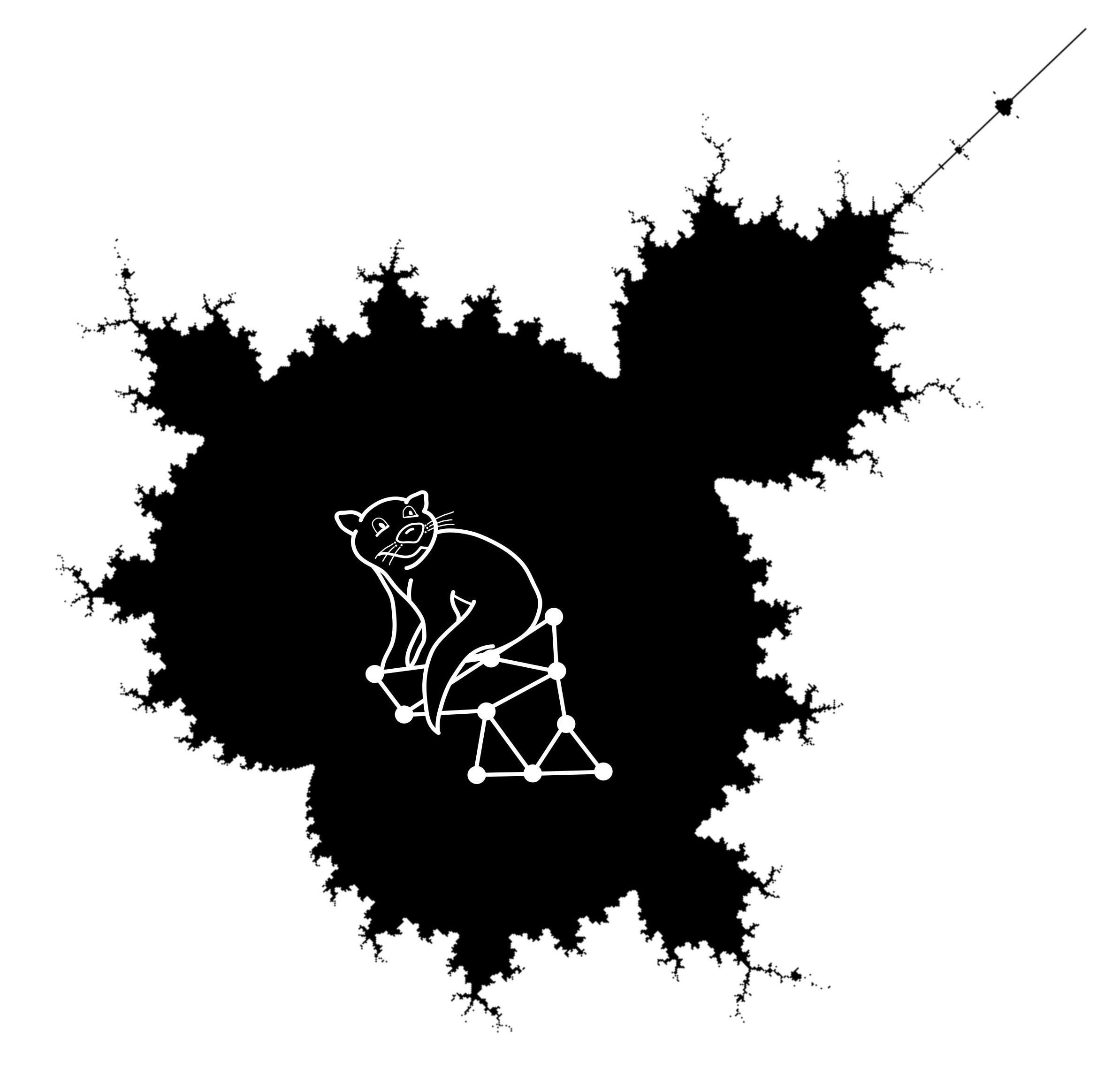}}}$}
	
	\affil{Institute of Mathematics, Leipzig University, Augustusplatz 10, 04109 Leipzig, Germany\\ sandra@albrechtsen-mail.de}

	\date{\vspace{-5ex}}
	
	\thanks{${\includegraphics[height=.8\baselineskip]{MandelbrotOtterSmall.jpg}}$ Supported by the Alexander von Humboldt Foundation in the framework of the Alexander von Humboldt Professorship of Daniel Král' endowed by the Federal Ministry of Education and Research.}

	\maketitle
	
	\begin{abstract}
		Menger's theorem is one of the most fundamental results in graph theory. It states that if a graph~$G$ does not contain $k$ disjoint paths between two given sets~$X$ and~$Y$ of vertices in~$G$, then there is a set of at most $k-1$ vertices that intersects every path between $X$ and~$Y$. 
		Nguyen, Scott, and Seymour gave a counterexample to the conjectured natural coarse variant in which the paths are required to be pairwise at distance at least~$d$, and, conversely, there is a set of at most $k-1$ bounded-radius balls intersecting every path between $X$ and~$Y$.
		In other words, the coarse Menger property does not hold in general.
		
		We prove that graphs whose cycles space is generated by cycles of bounded length do have the coarse Menger property.
		As a corollary, we show that many natural graphs and geodesic metric spaces have the coarse Menger property.
		These include hyperbolic graphs, Cayley graphs of finitely presented groups, planar graphs with bounded face size, and complete Riemannian planes.
	\end{abstract}
	
	{\bf{Keywords:} } Menger's theorem, coarse graph theory, cycle space, hyperbolic graph, finitely presented group
	
	{\bf{MSC 2020 Classification:}} 05C38, 05C40, 05C12, 51F30, 20F65, 20F67

	\section{Introduction}

	Menger’s theorem~\cite{Menger1927} is one of the most fundamental results in graph theory. Given a graph $G$ and subsets $X, Y$ of  $V(G)$, it asserts that the maximum number of pairwise disjoint $X$--$Y$ paths is equal to the minimal size of a set of vertices intersecting every $X$--$Y$ path. This result is particularly attractive as it asserts that the obvious obstruction to having many disjoint $X$--$Y$ paths (a small set of vertices separating $X$ and $Y$) in fact \emph{must} appear in any graph that fails to have many such paths.
	This basic duality has sparked many further duality results and has led to countless applications in graph theory.
	
	Recently, there has been great interest in \emph{coarse graph theory}, a rapidly developing new area that studies graphs from a geometric perspective. It focuses on `large-scale' properties of graphs and aims to characterise graphs in terms of quasi-isometries (for an introduction to this area, see \cite{GP23}). One central direction of study is the attempt to lift results from structural graph theory to the coarse world, where statements about the structure of a graph are only made up to quasi-isometry, and objects are required to be far apart or close instead of disjoint or intersecting, respectively. 
	For this, a coarse analogue of Menger's theorem would be a powerful tool. Motivated by this, Albrechtsen, Huynh, Jacobs, Knappe, and Wollan~\cite{DistanceMengerForTwo} and, independently, Georgakopoulos and Papasoglu~\cite{GP23} conjectured the following coarse version of Menger's theorem:
	
	\begin{mainconjecture}[Coarse Menger Conjecture] \label{conj:CoarseMenger}
		There is a function $g: \N^2 \to \N$ such that for every $k,d \in \N$, for every graph (or geodesic metric space)~$G$ and every two sets $X,Y \subseteq V(G)$ at least one of the following statements holds:
		\begin{enumerate}
			\item \label{itm:CoarseMengerConj:Paths} There are $k$ disjoint $X$--$Y$ paths in $G$ that are pairwise at distance at least $d$ from each other.
			\item \label{itm:CoarseMengerConj:Set} There is a set $Z \subseteq V(G)$ of size at most $k-1$ such that every $X$--$Y$ path is at distance at most~$g(k,d)$ from~$Z$.
		\end{enumerate} 
	\end{mainconjecture}
	
	Let us say more generally that a class of graphs or geodesic metric spaces $\cG$ has the \defn{coarse Menger property} if there is a function $g: \N^2 \to \N$ (which is allowed to depend on $\cG$) such that \cref{conj:CoarseMenger} holds for every $G \in \cG$ with function~$g$.
	If there also exists a function $f:\N^2\to \N$ (which is allowed to depend on~$\cG$) such that every $G \in \cG$ satisfies \cref{conj:CoarseMenger} with function~$g$ if we allow the set $Z$ in \ref{itm:CoarseMengerConj:Set} to have size $f(k,d)$ instead of $k-1$, then we say that $\cG$ has the \defn{weak coarse Menger property}.
	
	Albrechtsen, Huynh, Jacobs, Knappe, and Wollan~\cite{DistanceMengerForTwo} and, independently, Georgakopoulos and Papasoglu~\cite{GP23} proved that \cref{conj:CoarseMenger} holds true for two paths (i.e.\ $k=2$).
	However, unfortunately, Nguyen, Scott, and Seymour~\cite{CoarseMengerCounterex} disproved the coarse Menger conjecture~\cite{DistanceMengerForTwo,GP23}, that is the class of all graphs does not have the coarse Menger property, and it already fails in the case of three paths. They \cite{NewCounterexToCoarseMenger} later improved their construction to even show that the class of all graphs does not have the weak coarse Menger property. 
	\smallskip
	
	Despite this, the coarse Menger property holds for some special classes of graphs and geodesic metric spaces.
	For instance, graphs of bounded path-width \cite{NSSMengerPathWidth} and series-parallel graphs \cite{NSSMengerPathWidth} both have the coarse Menger property.
	Although the coarse Menger property remains open for planar graphs, it is known in the restricted case that $X, Y$ are contained in the same face boundary \cite{NSSPathsAcrossADisc}. 
	\smallskip
	
	Nguyen, Scott, and Seymour's~\cite{CoarseMengerCounterex} coarse Menger counterexample has tree-width at most 6.
	So, already relatively simple classes of graphs do not have the coarse Menger property.
	However, it can be easily shown that graphs of bounded tree-width have the \emph{weak} coarse Menger property. Moreover, quite recently, other classes of graphs have been shown to have the weak coarse Menger property, including planar graphs, string graphs, and graphs embeddable on surfaces of bounded Euler genus~\cite{BPPCoarseMenger}, and, more generally, graphs excluding some fixed finite graph $H$ as a minor~\cite{CHLCoarseMenger}. In particular, this implies that also complete Riemannian surfaces of bounded Euler genus have the coarse Menger property~\cite{CHLCoarseMenger}.
	The induced variant of the weak coarse Menger property has also been studied for bounded-degree graphs and for graphs excluding a topological minor~\cite{InducedMengerBounded1,InducedMengerBounded2}.
	\smallskip
	
	In this paper, we show that several further natural classes of graphs and geodesic metric spaces have the (exact) coarse Menger property.
	Our main result is for graphs whose cycle space is generated by cycles of bounded length.
	We refer the reader to \cref{subsec:CycleSpace} for the definitions concerning the cycle space. Roughly speaking, the cycles space of a graph~$G$ is generated by cycles of length at most~$\kappa$ if every cycle of~$G$ can be obtained from cycles of~$G$ of length at most~$\kappa$ by repeated symmetric differences. 
	
	\begin{mainresult} \label{main:CoarseMenger:BoundedCycleSpace}
		For every $\kappa \in \N$, the class of graphs whose cycle space is generated by cycles of length at most~$\kappa$ has the coarse Menger property.
	\end{mainresult}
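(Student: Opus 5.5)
We plan to prove \cref{main:CoarseMenger:BoundedCycleSpace} by induction on $k$. The strategy is to choose a minimal bundle of $X$--$Y$ paths and then use the bounded generation of the cycle space to show that a coarse obstruction must be ``localised'', that is, concentrated near few points.

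Fix $\kappa$, $k$ and $d$, and suppose that $G$ contains no $k$ pairwise $d$-far $X$--$Y$ paths. By induction on $k$ (the case $k=1$ is trivial), we may assume that $G$ has $k-1$ such paths $P_1,\dots,P_{k-1}$; otherwise we are already done. We choose these paths to be minimal in a suitable sense, for instance minimising the total length within a fixed finite window, after reducing to finite $G$ by compactness. Next we consider an auxiliary ``corridor'' structure. For $r=r(k,d,\kappa)$ large, let $B_i$ be the $r$-ball around $P_i$ and let $H=G-\bigcup_i B_i$. If some $X$--$Y$ path avoids all the $B_i$ near its middle portion, then rerouting should produce $k$ far paths, contradicting our assumption. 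So every $X$--$Y$ path must get close to some $P_i$.

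The heart of the argument is a homological obstruction. Because cycles of length at most $\kappa$ generate the cycle space, any cycle $C$ can be written as a sum of short cycles. Hence, whenever $C$ is formed from an $X$--$Y$ path $Q$ together with pieces of some $P_i$, the parity (mod~$2$) with which $C$ ``crosses'' a separator of the form $\partial B_{r'}(P_i)$ is determined locally. Since a short cycle has diameter at most $\kappa/2$, it cannot pass around a ball of radius greater than $\kappa$. We will use this fact as a discrete version of planar or simply connected ``no holes'' reasoning, in the same way that the Jordan curve theorem is used in the arguments for planar graphs. Concretely, we define for each $i$ an edge set $F_i$, namely the edges leaving $B_{r'}(P_i)$ on a chosen ``side''. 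We then show that the induced $\mathbb Z_2$-cochain is closed on every short cycle, and therefore on every cycle. This gives a well-defined two-sidedness of the corridor around $P_i$ far from its ends.

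With this side structure in hand, we argue as in Menger's augmenting-path proof. We build an auxiliary graph whose vertices are the ``crossing points'' where $X$--$Y$ paths jump between corridors, and we apply ordinary Menger's theorem to it, with capacity $1$ per corridor region. Either we find an augmenting sequence that can be realised geometrically, after shortcutting by distance $\ge d$ and using $r\gg d$, as $k$ far-apart paths, or the auxiliary graph has a cut consisting of at most $k-1$ bottleneck locations. Each such location is a ball of radius $g(k,d)$ about a single vertex, and taking these vertices as $Z$ gives exactly $k-1$ centres, as required. The main obstacle is the step just described. We must show that the crossings in an augmenting structure can be realised by disjoint far-apart rerouting, and that a cut in the auxiliary graph lies inside boundedly many balls rather than being spread along a whole corridor. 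It is precisely here that the Nguyen--Scott--Seymour counterexamples fail, because of their long cycles. We expect to attack this by showing, via the cocycle argument, that two paths that are far apart locally can interact only at places where a short cycle witnesses the interaction, which is what forces interactions to cluster in bounded-radius regions.
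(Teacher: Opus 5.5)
Your outline has a genuine gap. The step you flag as ``the main obstacle'' is essentially the whole theorem, and the tool you propose for it does not exist in this class.

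A $\mathbb{Z}_2$-cochain that vanishes on every cycle is just an edge cut $\delta(S)$. If it is to consist only of edges between $B_{r'}(P_i)$ and its complement, then $S$ must be a union of components of $G[B_{r'}(P_i)]$ and of $G-B_{r'}(P_i)$. Take the Cayley graph of $\mathbb{Z}^3$, whose cycle space is generated by $4$-cycles, so it lies in the class. There the complement of a tube around a straight path is connected. So the only such cochains are $\emptyset$ and $\delta(B_{r'}(P_i))$. There is no left/right side, and ``a short cycle cannot pass around a ball'' has no content. The two-sidedness your Jordan-curve analogy relies on is a planar phenomenon. Note also that every cycle meets every edge cut evenly regardless of the hypothesis, so crossing-parity considerations by themselves extract nothing from it.

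With that gone, the auxiliary-graph Menger step has nothing to stand on. You give no mechanism that turns an augmenting structure into $k$ pairwise $d$-far paths. Nor do you give one that confines a cut to a single bounded ball per corridor. These are exactly the points at which the Nguyen--Scott--Seymour examples fail.

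The paper extracts the hypothesis differently, and none of its ingredients appears in your outline:
\begin{enumerate}
\item By \cref{lem:kappa/2NhoodIsConnected}, for connected $Y$, every component $C$ of $G-Y$ has a connected $\left\lfloor\frac{\kappa-2}{2}\right\rfloor$-neighbourhood of $\partial_G C$ in $C$.
\item This gives \cref{lem:Hypertree}: components of the complement of far-apart connected corridors attach to them in an acyclic hypergraph pattern. It also lets you reroute through a component while staying close to one chosen corridor.
\item Localisation to one ball per corridor comes from the $k=2$ theorem. It is applied in the graph obtained by contracting the components outside $B_G(P_i,r)$, and the result is lifted back via that lemma (\cref{lem:2PathsInATube}).
\item The global step is an extremal choice of centres $z_i\in V(P_i)$. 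They are pushed as far towards $Y$ as possible, subject to having two $2d$-far $X$--$B_i$ paths whose interactions form a forest $H(Z)$.
\item If the enlarged balls miss some $X$--$Y$ path, some component attaches to the $X$-part of one corridor and the $Y$-part of another. Reapplying the tube lemma then either yields a ball further towards $Y$, contradicting extremality, or two paths that can be propagated along a path of the forest to give the desired far-apart paths.
\end{enumerate}

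Two smaller points. First, to have disjoint corridors of radius $r\gg d$, you must invoke the induction hypothesis with a distance parameter $D\gg r$ (so $g(k,d)=g(k-1,D)$), not with $d$. Second, the compactness reduction to finite $G$ is unjustified. Finite subgraphs need not inherit the cycle-space hypothesis, and distances in a subgraph can exceed those in $G$.
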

	
	\cref{main:CoarseMenger:BoundedCycleSpace} has several applications.
	For instance, as corollaries, we show that hyperbolic spaces, Cayley graphs of finitely presented groups, planar graphs with bounded face size, and complete Riemannian planes, all have the coarse Menger property.
	\smallskip
	
	A graph $G$ is \defn{$\delta$-hyperbolic} for some $\delta \in \N$ if for every three distinct vertices $u,v,w$ of $G$ and for every three shortest paths $P,Q,W$, one between each two of the vertices $u,v,w$, each of the paths $P,Q,W$ lies in the ball of radius $\delta$ around the union of the other two paths.
	
	\begin{maincorollary} \label{maincor:CoarseMenger:HyperbolicGraphs}
		For every $\delta > 0$, the class of $\delta$-hyperbolic graphs (or geodesic metric spaces) has the coarse Menger property.
	\end{maincorollary}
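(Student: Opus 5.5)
The plan is to deduce the corollary from \cref{main:CoarseMenger:BoundedCycleSpace}, using the standard fact that the cycle space of a $\delta$-hyperbolic graph is generated by cycles of length bounded in terms of $\delta$. I would first treat graphs and then reduce geodesic metric spaces to graphs by a quasi-isometry.

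For graphs, the key step is to show that every cycle $C$ in a $\delta$-hyperbolic graph $G$ is a sum (in the cycle space) of cycles of length at most $\kappa=\kappa(\delta)$, say $\kappa = 8\delta+4$ or similar. I would argue by induction on $|C|$. Let $C$ be a cycle of length $n>\kappa$. Pick vertices $u,v$ on $C$ splitting it into two arcs $A_1,A_2$ of lengths $\lfloor n/2\rfloor$ and $\lceil n/2\rceil$, and fix a geodesic $P$ from $u$ to $v$.

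There are two cases.
\begin{itemize}
\item If $d(u,v)<n/2$ (more precisely, if $|P|$ is noticeably shorter than both arcs), then $C=(A_1\cup P)+(A_2\cup P)$ in the cycle space. Both summands are closed walks, hence sums of cycles, and each is strictly shorter than $C$, so induction applies.
\item Otherwise $C$ is ``almost geodesic'' on large scales. In that case I would instead apply the standard local-to-global argument: choose vertices $u$ and $w$ on $C$ at distance about $4\delta+2$ along $C$. Either the subarc between them is not a geodesic, and shortcutting it by a geodesic reduces the length, or all subarcs of length $\kappa/2$ are geodesic. In the latter case $C$ is a closed $\kappa/2$-local geodesic, and such local geodesics in a $\delta$-hyperbolic space are global quasi-geodesics once $\kappa$ is large compared with $\delta$. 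A closed quasi-geodesic of length greater than $\kappa$ is impossible, a contradiction.
\end{itemize}
The shortcut step replaces a subarc $S$ of $C$ with endpoints $a,b$ by a geodesic $Q$ with $|Q|<|S|$. Then $C=(S\cup Q)+C'$, where $S\cup Q$ has length at most $\kappa$ and $C'$ is the closed walk obtained from $C$ by replacing $S$ with $Q$; this $C'$ is shorter than $C$ and decomposes into cycles shorter than $C$.

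I expect the main obstacle to be this local-geodesic step, namely quoting or proving cleanly that $k$-local geodesics with $k\geq 8\delta+1$ are quasi-geodesic (a standard fact, e.g.\ from Bridson--Haefliger III.H.1.13), and making the induction measure (the length of $C$) strictly decrease in every case. Alternatively I would cite the well-known fact that hyperbolic graphs are ``coarsely simply connected'', with loops filled by bounded-length relators via the linear isoperimetric inequality, and translate the filling into a cycle-space decomposition.

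For a geodesic metric space $X$, I would take a maximal $1$-separated net $N$ and form the graph on $N$ joining points at distance at most $3$. This graph is quasi-isometric to $X$, with constants depending only on $\delta$, and is therefore $\delta'$-hyperbolic. Paths in $X$ correspond to paths in the graph up to bounded distance, and vice versa, so $k$ paths in the graph that are pairwise far apart give $k$ such paths in $X$, and a hitting set $Z$ in the graph transfers back to $X$. The coarse Menger property is preserved under quasi-isometries with controlled constants, at the cost of adjusting $d$ and $g$, so the result for $X$ follows from the graph case.
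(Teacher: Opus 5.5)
Your proof is correct in outline, but it bounds the generators of the cycle space by a different and heavier route than the paper; your reduction for geodesic metric spaces (net graph, hyperbolicity is a quasi-isometry invariant, coarse Menger is preserved under quasi-isometries) is the same as the paper's.

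The paper uses two elementary facts. First, the cycle space of any graph is generated by its geodesic (isometric) cycles. This is proved by exactly the shortcut in your first case, applied to \emph{every} pair $u,v\in C$ with $d_G(u,v)<d_C(u,v)$ rather than to one antipodal pair. Second, an isometric cycle in a $\delta$-hyperbolic graph has length at most about $6\delta$. To see this, split the cycle into three arcs, which are shortest paths; the midpoint of one arc is at distance about $n/6$ from the other two, so slimness bounds $n$. This gives $\kappa=6\delta+1$ directly from the definition.

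You instead shortcut only short non-geodesic subarcs, and exclude long cycles that are geodesic at scale $\kappa/2$ via the local-to-global theorem for local geodesics (Bridson--Haefliger III.H.1.13). That works, but it has costs:
\begin{itemize}
\item It imports a nontrivial black box.
\item To apply that theorem you must pass to the metric graph, with a slightly worse hyperbolicity constant.
\item It gives a worse $\kappa$.
\item Your antipodal case becomes superfluous, since the local-geodesic dichotomy alone already handles every cycle of length greater than $\kappa$.
\end{itemize}

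There is also a parameter slip. That theorem needs the locality scale to exceed $8\delta$, so $\kappa=8\delta+4$ (scale $4\delta+2$) is too small; take $\kappa$ around $16\delta$ plus a constant. This is harmless because $\kappa$ only has to depend on $\delta$.

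Your route does yield a reusable principle: every long cycle has a bounded-length shortcut. For this corollary, though, the paper's route is simpler and self-contained.
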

	
	In particular, \cref{maincor:CoarseMenger:HyperbolicGraphs} applies to Cayley graphs of hyperbolic groups.
	
	\begin{proof}
		It follows immediately from the definition of $\delta$-hyperbolic that a $\delta$-hyperbolic graph~$G$ does not contain any geodesic cycles of length at least~$6\delta+1$. Since the cycle space of a graph is generated by its geodesic cycles~\cite[Ch.~1 Exercise~44~(ii)]{Bibel}, the statement follows from \cref{main:CoarseMenger:BoundedCycleSpace} with $\kappa = 6 \delta + 1$.
		
		If $G$ is a $\delta$-hyperbolic geodesic metric space, then it is quasi-isometric to a graph (where the parameters of the quasi-isometry do not depend on the space) \cite[Corollary~21]{BDAsDim}, and the cycle space of this graph is generated by cycles of bounded length. The assertion follows as the coarse Menger property is preserved by quasi-isometries~\cite[Lemma~9.1]{CHLCoarseMenger}.
	\end{proof}

	\begin{maincorollary} \label{maincor:CoarseMenger:FintelyPresentedGroup}
		For every $\ell \in \N$, the class of locally finite Cayley graphs of finitely generated groups that have a finite presentation with relations of length at most~$\ell$ has the coarse Menger property.
	\end{maincorollary}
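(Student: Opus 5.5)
The plan is to deduce the corollary directly from \cref{main:CoarseMenger:BoundedCycleSpace}. We show that if $\Gamma$ is a locally finite Cayley graph of a group $\Gamma_0 = \langle S \mid R\rangle$ with $S$ finite and every relator in $R$ of length at most~$\ell$, then the cycle space of $\Gamma$ is generated by cycles of length at most~$\ell$. Here we take $\Gamma$ to be the Cayley graph with respect to the generating set~$S$ of the presentation; if the statement is meant for a different finite generating set, we first pass to $S$ via the usual quasi-isometry and use that the coarse Menger property is preserved by quasi-isometries~\cite[Lemma~9.1]{CHLCoarseMenger}. That bounded-length presentation would then yield a possibly different but still bounded~$\kappa$.

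The key step is the standard van Kampen / Cayley complex argument. Attach a $2$-cell to $\Gamma$ along every closed walk $g\cdot r$ with $g \in \Gamma_0$ and $r \in R$. This gives the Cayley complex $X$, which is simply connected because $\langle S\mid R\rangle$ presents $\Gamma_0$. Let $C$ be a cycle of $\Gamma$ and read off its boundary word $w$, which equals $1$ in $\Gamma_0$. Then $w$ is freely equal to a product $\prod_{i=1}^m u_i r_i^{\pm1} u_i^{-1}$ with $r_i \in R$.

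Now pass to the cycle space over $\mathbb{F}_2$, where each closed walk has an edge set counted mod~$2$. Free reductions cancel backtracking edges in pairs, and a conjugating path $u_i$ is traversed once in each direction. Hence the edge set of $C$ equals, mod~$2$, the sum of the edge sets of the closed walks $g_i\cdot r_i$, each of length at most $\ell$. The edge set of a closed walk of length at most $\ell$ is in turn a sum of edge sets of cycles of length at most $\ell$, since its mod~$2$ edge set is an Eulerian subgraph with at most $\ell$ edges and so decomposes into edge-disjoint cycles. So $C$ lies in the span of cycles of length at most~$\ell$, and \cref{main:CoarseMenger:BoundedCycleSpace} with $\kappa=\ell$ gives the coarse Menger property with a function $g$ depending only on~$\ell$.

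The main obstacle is the bookkeeping that identifies this homotopy statement with the definition of "generated" in \cref{subsec:CycleSpace}. If the cycle space there allows only finite sums, the argument above suffices, since every cycle of a locally finite graph is finite and the van Kampen expression is finite. If the cycle space includes thin infinite sums, we would still only need finite cycles to lie in the span, because the generating statement is about cycles. Loops and multi-edges coming from generators of order at most $2$ or the identity are harmless: they give cycles of length at most~$2$, and we may take $\kappa=\max(\ell,2)$.
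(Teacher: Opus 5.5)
Your proof is correct and follows the paper's route. The paper cites the well-known fact that the Cayley graph with respect to the generating set of a presentation with relators of length at most~$\ell$ has its cycle space generated by cycles of length at most~$\ell$, and then applies \cref{main:CoarseMenger:BoundedCycleSpace} with $\kappa=\ell$; you additionally supply the standard proof of that fact (writing the boundary word as a product of conjugates of relators, and noting that free reductions and the conjugating paths cancel mod~$2$).

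One caveat concerns your aside about other generating sets: it would not give a uniform function~$g$ over the class. The quasi-isometry constants, and the lengths of any rewritten relators, depend on the particular group. The statement should therefore be read, as the paper does, with the Cayley graph taken with respect to the presentation's generators.
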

	
	In particular, \cref{maincor:CoarseMenger:FintelyPresentedGroup} applies to Cayley graphs of finitely generated, planar groups (which are finitely presented by a result of Droms~\cite{CDPlanarGroupsAreFinitelyPresented}).
	
	\begin{proof}
		It is well-known (see e.g.\ \cite[Section~4.1]{HAccessibilityInTransGraphs}) that the cycle space of a locally finite Cayley graph of a group $\Gamma$ that has a finite presentation $\langle S | R \rangle$ with relations of length at most~$\ell$ is generated by cycles of length at most $\ell$. Therefore, the statement follows from \cref{main:CoarseMenger:BoundedCycleSpace} with $\kappa = \ell$.
	\end{proof}

	It is conjectured \cite{NSSPathsAcrossADisc} that planar graphs have the coarse Menger property, and as discussed, this conjecture has received significant attention.
	Since the cycle space of a planar graph is generated by its (internal) facial cycles, by \cref{main:CoarseMenger:BoundedCycleSpace}, we obtain the coarse Menger property for planar graphs whose (internal) faces have bounded size.
	
	\begin{maincorollary} \label{maincor:CoarseMenger:planar}
		For every $\ell > 0$, the class of finite planar graphs whose internal faces have length at most~$\ell$ has the coarse Menger property. \qed
	\end{maincorollary}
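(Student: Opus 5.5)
This corollary is an immediate consequence of \cref{main:CoarseMenger:BoundedCycleSpace}, exactly as with the previous corollaries. The plan is to show that for a finite planar graph~$G$ whose internal faces have length at most~$\ell$, the cycle space of~$G$ is generated by cycles of length at most~$\ell$. Then \cref{main:CoarseMenger:BoundedCycleSpace} applies with $\kappa = \ell$. The function~$g$ it provides depends only on~$\ell$, so it works uniformly for the whole class.

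To verify the generation statement, fix a plane embedding of~$G$. The standard fact I would invoke is that the cycle space of a finite plane graph is generated by the boundaries of its internal faces; see e.g.\ \cite{Bibel}, where it is stated for $2$-connected plane graphs with facial cycles. The argument runs as follows. Given a cycle~$C$, it bounds a disc by the Jordan curve theorem. The sum over~$\mathbb{F}_2$ of the boundaries of all internal faces inside this disc equals~$E(C)$: each edge inside the disc lies on two such faces (or twice on one) and cancels, while each edge of~$C$ lies on exactly one of them.

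The only point needing care is that $G$ need not be $2$-connected, so a face boundary may fail to be a cycle. In that case the face boundary is a closed walk of length at most~$\ell$, and its edge set, read in~$\mathbb{F}_2$, is an element of the cycle space. Every element of the cycle space is an edge-disjoint union of cycles, and any cycle contained in a closed walk of length at most~$\ell$ has length at most~$\ell$. Hence each face-boundary element is itself a sum of cycles of length at most~$\ell$. Combining this with the previous paragraph, every cycle of~$G$ is a sum of cycles of length at most~$\ell$, which is the hypothesis of \cref{main:CoarseMenger:BoundedCycleSpace}.

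I expect no real obstacle beyond the face-boundary bookkeeping in the non-$2$-connected case just handled. The outer face is simply excluded from the generating set and never needs to be bounded. All the difficulty lies in \cref{main:CoarseMenger:BoundedCycleSpace} itself.
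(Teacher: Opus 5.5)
Your proposal is correct and follows the paper's approach: the paper derives this corollary in one line, from the fact that the internal facial cycles generate the cycle space of a planar graph, and then applies \cref{main:CoarseMenger:BoundedCycleSpace} with $\kappa = \ell$. Your reduction of each face-boundary walk mod~$2$, which handles planar graphs that are not $2$-connected, is a valid detail that the paper leaves implicit.
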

	
	\noindent Moreover, \cref{main:CoarseMenger:BoundedCycleSpace} also implies that every (infinite) planar quasi-transitive graph has the coarse Menger property~\cite{HCycleSpacePlanarQT}.
	\medskip

	Recently, Liu~\cite{CHLCoarseMenger} proved that complete Riemannian planes have the weak coarse Menger property.
	As a further corollary of \cref{maincor:CoarseMenger:planar}, we obtain that complete Riemannian planes in fact have the (exact) coarse Menger property.
	This follows from a recent theorem of Davies~\cite{DStringGraphs} that complete Riemannian planes are quasi-isometric to planar triangulations, the fact that the coarse Menger property is preserved by quasi-isometry~\cite{CHLCoarseMenger}, and \cref{maincor:CoarseMenger:planar}.
	\medskip
	
	We remark that the assumption that the cycle space of a graph is generated by cycles of bounded length has already proved fruitful in the context of coarse graph theory for vertex-transitive graphs~\cite{AHAsymptoticGrids,M24+,EGG25,MMSkPlanar}. For example, MacManus~\cite{M24+} proved that a locally finite Cayley graph of a finitely presented group is planar if and only if it excludes some finite graph as an asymptotic minor.\footnote{See e.g.\ \cite{GP23} for the definitions of `quasi-isometric' and `asymptotic minor'.} Moreover, Albrechtsen and Hamann~\cite{AHAsymptoticGrids} showed that a quasi-transitive graph whose cycle space is generated by cycles of bounded length is quasi-isometric to a tree if and only if it does not contain the infinite grid on $\Z \times \Z$ as an asymptotic minor. In their~\cite{AHAsymptoticGrids} proof, they only use the assumption that the cycle space of the graph is generated by cycles of  length at most $\kappa \in \N$ to invoke their lemma \cite[Lemma~5.1]{AHAsymptoticGrids}, which asserts that for every connected subgraph $H$ of $G$, and every component~$C$ of $G-V(H)$, the $\kappatwo$-neighbourhood of~$H$ in~$C$ (that is $B_C\left(N_G(H) \cap V(C), \kappatwo\right)$ is connected (see \cref{lem:kappa/2NhoodIsConnected})). Our proof of \cref{main:CoarseMenger:BoundedCycleSpace} also relies on this lemma, and we will in fact only make use of the cycle space assumption to apply this lemma.
	\medskip

	This paper is organised as follows. In \cref{sec:Prelims} we recall some definitions and lemmas that we use throughout the paper. In \cref{sec:ProofSketch} we give a brief sketch of the proof, and provide some intuition why the assumption on the cycle space is useful to prove \cref{main:CoarseMenger:BoundedCycleSpace}. In \cref{sec:CoarseMenger:BoundedCycleSpace} we prove \cref{main:CoarseMenger:BoundedCycleSpace}.
	Finally, we conclude the paper with some concluding remarks in \cref{sec:ConcludingRemarks}.
	\medskip

	\section{Preliminaries} \label{sec:Prelims}
	
	All graphs in this paper are simple, and they may be infinite, unless otherwise stated.
	Our notions mainly follow~\cite{Bibel}. In what follows, we recall some definitions which we need later.
	
	For two sets $X,Y$ of vertices of $G$, an \defn{$X$--$Y$ path} meets~$X$ precisely in its first vertex and~$Y$ precisely in its last vertex. Moreover, if $P = p_0 \dots p_n$ is a path, then we denote by \defn{$p_iPp_j$} for $i, j \in \{0, \dots, n\}$ the subpath $p_i \dots p_j$ of~$P$, and we abbreviate $\defnm{p_iP} := p_iPp_n$ and $\defnm{Pp_i} := p_0Pp_i$.

	Given sets $U' \subseteq U$ of vertices of a graph $G$, a component $C$ of $G-U$ \defn{attaches} to~$U'$ if $C$ has a neighbour in~$U'$.
	The \defn{boundary $\partial_G X$} of a subgraph $X$ of $G$ is the set $N_G(V(G-X))$ of vertices of~$X$ that send in~$G$ an edge outside of~$X$. For example, the boundary $\partial_G C$ of a component $C$ of $G-U$ is $N_G(U) \cap V(C)$.

	\subsection{Distance and balls}
	
	Let $G$ be a graph.
	We write~\defn{$d_G(v, u)$} for the distance of the two vertices~$v$ and~$u$ in~$G$. 
	For two sets~$U$ and~$U'$ of vertices of~$G$, we write~\defn{$d_G(U, U')$} for the minimum distance of two elements of~$U$ and~$U'$, respectively.
	If one of~$U$ or~$U'$ is just a singleton, then we omit the braces, writing $d_G(v, U') := d_G(\{v\}, U')$ for $v \in V(G)$.
	If $X$ is a subgraph of $G$, then we abbreviate $d_G(U,V(X))$ as $d_G(U,X)$.
	
	Given a set~$U$ of vertices of~$G$, the \defn{ball (in~$G$) around~$U$ of radius $r \in \N$}, denoted by~\defn{$B_G(U, r)$}, is the set of all vertices in~$G$ of distance at most~$r$ from~$U$ in~$G$.
	If~$U = \{v\}$ for some~$v \in V(G)$, then we omit the braces, writing~$B_G(v, r)$ for the ball (in $G$) around~$v$ of radius~$r$.
	Additionally, we abbreviate the induced subgraph on $B_G(U,r)$ of $G$ with $\defnm{G[U,r]} := G[B_G(U,r)]$.
	If $X$ is a subgraph of $G$, then we abbreviate $B_G(V(X),r)$ and $G[V(X),r]$ as $B_G(X,r)$ and $G[X,r]$, respectively.

	\subsection{Cycle Space} \label{subsec:CycleSpace}
	
	Let $G$ be a graph. The \defn{edge space} of~$G$ is the vector space over the $2$-element field~$\mathbb{F}_2$ of all functions $E(G) \rightarrow \mathbb{F}_2$: its elements correspond to the subsets of $E(G)$ and vector addition corresponds to symmetric difference.
	The \defn{cycle space} of~$G$ is the subspace of the edge space of~$G$ spanned by all the cycles in~$G$~-- more precisely, by their edge sets. 
	
	To prove \cref{main:CoarseMenger:BoundedCycleSpace}, the only thing we need to know about the cycle space is the following lemma:

	\begin{lemma}[\rm{\cite[Lemma 5.1]{AHAsymptoticGrids}}]  \label{lem:kappa/2NhoodIsConnected}
		Let $G$ be a graph whose cycle space is generated by cycles of length at most $\kappa \in \N$, and let~$Y$ be a connected subgraph of $G$. Then for every component $C$ of $G-Y$ that attaches to $Y$, the graph $C\left[\partial_G C, \left\lfloor \frac{\kappa-2}{2}\right\rfloor\right]$ is connected.
	\end{lemma}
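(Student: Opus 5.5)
Write $r := \left\lfloor \frac{\kappa-2}{2}\right\rfloor$ and $H := C\left[\partial_G C, r\right]$. The plan is a parity argument. Assuming $H$ is disconnected, we build a cycle of $G$ that meets a suitable edge set $F$ an odd number of times. We then check that every cycle of length at most $\kappa$ meets $F$ an even number of times. Since every element of the cycle space is a sum of such short cycles, and $|Z \cap F| \bmod 2$ is additive under symmetric difference, this is a contradiction.

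\emph{Step 1 (setup).} $H$ is nonempty since $C$ attaches to $Y$. Every vertex $x$ of $H$ is joined to $\partial_G C$ by a shortest path in $C$ of length at most $r$, and all vertices of that path lie in $B_C(\partial_G C, r)$. Hence every component of $H$ contains a vertex of $\partial_G C$. So if $H$ is disconnected, we can pick $u,v \in \partial_G C$ in different components of $H$; let $A$ be the component containing $u$. Choose neighbours $y,y' \in Y$ of $u,v$, a $y$--$y'$ path $R$ in $Y$ (using that $Y$ is connected), and a $u$--$v$ path $Q$ in $C$ (using that $C$ is connected). Then $O := uyRy'vQu$ is a cycle of $G$; if $y=y'$, $R$ is trivial.

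\emph{Step 2 (the edge set).} Let $F$ be the set of edges of $C$ with exactly one end in $V(A)$, i.e.\ the cut of $C$ determined by $V(A)$. Only the subpath $Q$ of $O$ uses edges of $C$, and $Q$ runs from $u \in V(A)$ to $v \notin V(A)$. Hence $|E(O) \cap F|$ is odd.

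\emph{Step 3 (short cycles are even).} Consider an edge $aw \in F$ with $a \in V(A)$. Then $w \notin B_C(\partial_G C, r)$, since otherwise $w$ would lie in $H$ and, being adjacent to $a$, in $A$. So $d_C(w,\partial_G C) = r+1$. Any path in $G$ from $w$ to $Y$ must leave $C$ through $\partial_G C$. Therefore it has length at least $r+2 \geq \frac{\kappa-3}{2}+2 = \frac{\kappa+1}{2}$.

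Now let $Z$ be a cycle of length at most $\kappa$ meeting $F$, say in an edge $aw$. If $Z$ also met $Y$, then the two arcs of $Z$ from $w$ to $Y$ would each have length at least $\frac{\kappa+1}{2}$, giving $|Z| \geq \kappa+1$, which is impossible. So $Z$ avoids $Y$. Being connected and meeting $C$, $Z$ then lies in $C$. As $F$ is a cut of $C$, the cycle $Z$ meets $F$ an even number of times. Cycles not meeting $F$ trivially meet it evenly.

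Writing $E(O)$ as a symmetric difference of edge sets of cycles of length at most $\kappa$ therefore gives $|E(O)\cap F|$ even, contradicting Step 2.

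The main point to get right is the distance computation in Step 3. It is exactly where the value $r=\left\lfloor\frac{\kappa-2}{2}\right\rfloor$ is used: we need $2(r+2) > \kappa$ to force short cycles through $F$ to stay inside $C$.
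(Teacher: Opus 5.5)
The paper gives no proof of this lemma. It is quoted from \cite[Lemma~5.1]{AHAsymptoticGrids} and used only as a black box, so there is no in-paper argument to compare yours with. Judged on its own, your proof is correct and complete, and each step holds:
\begin{itemize}
\item Every component of $H$ contains a vertex of $\partial_G C$, so $u$ and $v$ exist.
\item $E(O)\cap F=E(Q)\cap F$ is odd, because $Q$ runs from $V(A)$ to $V(C)\setminus V(A)$.
\item Since $H$ is an induced subgraph, an edge $aw\in F$ with $a\in V(A)$ forces $w\notin B_C(\partial_G C,r)$. Hence $d_G(w,Y)\geq r+2\geq\frac{\kappa+1}{2}$.
\item The two internally disjoint arcs of a cycle through $w$ to $Y$ would then have total length at least $\kappa+1$. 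So every generator meeting $F$ lies in $C$ and crosses the cut $F$ evenly.
\end{itemize}
The parity bookkeeping is legitimate, because the paper defines the cycle space via finite sums, and $E(O)\cap F$ is finite even when $F$ is not. You tacitly use $r\geq 0$, i.e.\ $\kappa\geq 2$, to get $H\neq\emptyset$; for smaller $\kappa$ the statement is degenerate anyway.

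Your route is the cut-space (dual) form of the other natural argument, which goes as follows. Write $E(O)$ as a sum of short cycles. Those meeting $Y$ have all their vertices in $C$ inside $B_C(\partial_G C,\lfloor\kappa/2\rfloor-1)=B_C(\partial_G C,r)$, so their edges in $C$ are edges of $H$. Those avoiding $Y$ contribute even-degree edge sets. Hence $E(Q)$ plus an even-degree set equals an edge set of $H$ whose odd-degree vertices are exactly $u$ and $v$, which puts $u$ and $v$ in the same component of $H$.

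Both arguments hinge on the same estimate $2(r+2)>\kappa$. Yours isolates a single cut and tests it against each generator separately, which avoids degree bookkeeping. The primal version is direct rather than by contradiction: it connects any two vertices of $\partial_G C$ inside $H$ without first assuming $H$ is disconnected.
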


	\subsection{Coarse Menger theorem for two paths}
	
	In the proof of \cref{main:CoarseMenger:BoundedCycleSpace} we use that the coarse Menger conjecture (\cref{conj:CoarseMenger}) is true for two paths:
	
	\begin{theorem}[\rm{\cite[Theorem~2]{DistanceMengerForTwo} \& \cite[Theorem~8.1]{GP23}}] \label{thm:CoarseMengerFor2}
		For every $d \in \N$, for every graph $G$ and $X,Y \subseteq V(G)$, at least one of the following statements holds:
		\begin{enumerate}
			\item \label{itm:CoarseMengerFor2:Paths} There exist two $X$--$Y$ paths $P_1$, $P_2$ such that $d_G(P_1,P_2) \geq d$. 
			\item \label{itm:CoarseMengerFor2:Ball} There exists $z \in V(G)$ such that $B_G(z,129\cdot d)$ intersects every $X$--$Y$ path.
		\end{enumerate}
	\end{theorem}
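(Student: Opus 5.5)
We would prove the contrapositive: fix $d$, $G$, $X$, $Y$, and suppose that for every $z\in V(G)$ some $X$--$Y$ path avoids $B_G(z,129d)$. The goal is then to build two $X$--$Y$ paths at distance at least~$d$. Set $R:=129d$. First take a shortest $X$--$Y$ path $P=p_0\dots p_n$. If it does not exist, \ref{itm:CoarseMengerFor2:Ball} holds vacuously. Minimality gives that every subpath $p_iPp_j$ is a geodesic of~$G$: a shorter $p_i$--$p_j$ path could be spliced in to give a shorter $X$--$Y$ walk, and hence a shorter $X$--$Y$ path. So distances along $P$ agree with distances in~$G$. This is what will let us say that two ``close encounters'' with $P$ far apart along $P$ are also far apart in~$G$.

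Next we fix a second path. By assumption, every ball $B_G(p_i,R)$ is avoided by some $X$--$Y$ path $Q^{(i)}$. Among all pairs $(P',Q)$ of $X$--$Y$ paths, we choose one that is extremal in a suitable lexicographic sense. For instance, we minimise the length of the ``contact interval'', that is, the set of vertices of $P'$ within distance~$d$ of~$Q$, measured along a geodesic reference path. The key step is an exchange argument. Suppose $Q$ comes within distance $d$ of $P'$ at a vertex $p_i$ and again at a vertex $p_j$ with $j\gg i$. Linking the two paths by short connections of length at most~$d$ at these points and swapping their tails produces two new $X$--$Y$ paths. We want to show that this strictly improves the extremal parameter unless all contacts lie within a bounded stretch of~$P'$, of length $O(d)$. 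Because subpaths of $P$ are geodesic, a bounded stretch of $P'$ lies inside a single ball $B_G(p_m,O(d))$. Tracking the constants, we aim to land inside radius~$R$.

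Finally, in the remaining case all contacts between the two paths lie in one ball $B_G(p_m,cd)$ with $c\le 129$. We then use the path $Q^{(m)}$ avoiding $B_G(p_m,R)$ to reroute whichever of the two paths passes through that ball. The slack between $cd$ and $R$ is what guarantees that the rerouted pieces stay at distance at least~$d$ from the other path. This rerouting may create new contacts elsewhere, so it has to be built into the same extremal choice rather than done afterwards.

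The main obstacle is the exchange step. Swapping tails at close encounters can create new close encounters between the freshly combined pieces. A naive potential such as total length or number of contact points need not decrease, and detours of $Q$ may wander far from $P$ and come back. I would first try a potential that uses the geodesic structure of $P$: the index of the last vertex of the reference geodesic within distance~$d$ of the second path, to be pushed monotonically towards~$Y$. The choice of $Q^{(i)}$ would be made for a sequence of indices spaced about $R/2$ apart, and one would check that each swap advances this index by at least $\Omega(d)$. The constant $129$ would come out of summing the radii of these nested balls and links.
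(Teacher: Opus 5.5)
What you have written is a strategy rather than a proof. The exchange step you flag as the main obstacle is the entire content of the theorem, and as described it cannot work.

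\textbf{Why the exchange step fails.} Swapping tails can help when encounters \emph{cross}, i.e.\ when they occur in different orders along $P'$ and along $Q$. Now suppose instead the paths run side by side at their first encounter. Precisely, let $p$ be the first vertex of $P'$ at distance less than $d$ from $Q$, let $q$ be the first vertex of $Q$ at distance less than $d$ from $P'$, and suppose $d_G(p,q)<d$. Take any two disjoint $X$--$Y$ paths assembled from $P'$, $Q$ and short links between them, starting where $P'$ and $Q$ start. These must contain $p$ and $q$ respectively. So this encounter survives every such recombination, and no contact-based potential is forced to decrease.

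Separation can therefore only come from new material, namely the detours $Q^{(i)}$ supplied by the hypothesis, and over these you have no control. The path $Q^{(m)}$ avoids the single ball $B_G(p_m,R)$, but elsewhere it may come within distance $d$ of both current paths, and of pieces you have already built, arbitrarily often. Hence two of your claims are unjustified. The first is that each step advances your index by $\Omega(d)$. The second is the final rerouting through $Q^{(m)}$, where the slack between $cd$ and $R$ only controls what happens near $p_m$.

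A further problem is that geodesicity is available only for the original $P$. Once $P'$ has been changed by exchanges, a short stretch of $P'$ need not lie in a small ball. So the passage from `all contacts lie in a bounded stretch of $P'$' to `all contacts lie in $B_G(p_m,cd)$' breaks.

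\textbf{The missing idea.} Stop manipulating pairs of paths. Keep the shortest path $P$ fixed, delete $B_G(P,r)$ for a suitable $r$ (a multiple of $d$), and work with the components of $G-B_G(P,r)$. These components are disjoint, and their interaction with $P$ is recorded by where along $P$ they attach. Since $P$ is geodesic, an $X$--$Y$ path avoiding $B_G(p_m,R)$, with $R$ sufficiently larger than $r$, cannot get past $p_m$ inside $B_G(P,r)$. Roughly, it must use a component attaching far before and far after $p_m$, or one containing vertices of $X$ or $Y$. One then selects a well-behaved (e.g.\ minimal) family of such components and reroutes $P$ through some of them, freeing space for a second path at distance at least $d$. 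This is the route of the cited proofs, which the paper recalls in its proof sketch; the paper does not reprove the theorem itself but imports it from those works.
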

	
	Moreover, we need the following remark about the two paths $P_1, P_2$ in~\ref{itm:CoarseMengerFor2:Paths} of~\cref{thm:CoarseMengerFor2}.
	
	\begin{proposition}[\rm{\cite[Corollary~8.14]{GP23}}] \label{prop:CoarseMengerFor2:EndPoints}
		In the setting of \cref{thm:CoarseMengerFor2}, if \ref{itm:CoarseMengerFor2:Paths} holds, then for every $x \in X$ and $y \in Y$ such that some $X$--$Y$ path has endvertices $x,y$, we can choose the paths $P_1, P_2$ in \ref{itm:CoarseMengerFor2:Paths} so that both $x, y$ are among their four endvertices.
	\end{proposition}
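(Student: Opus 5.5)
The statement is cited from \cite[Corollary~8.14]{GP23}, so my plan is to reread the proof of \cref{thm:CoarseMengerFor2} rather than apply the theorem as a black box. The black-box route does not work directly. Shrinking $X$ to $\{x\}$ (or $Y$ to $\{y\}$) makes \ref{itm:CoarseMengerFor2:Ball} hold trivially, with the ball around $x$. And an arbitrary pair $P_1,P_2$ from \ref{itm:CoarseMengerFor2:Paths} need not be related to a given $x$--$y$ path $Q$, since $Q$ may come close to both of them.

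The proof of \cref{thm:CoarseMengerFor2} in \cite{GP23} (and similarly in \cite{DistanceMengerForTwo}) starts from one fixed $X$--$Y$ path and only ever modifies the \emph{second} path. I would make this explicit as follows:
\begin{enumerate}
\item Start the argument with $P:=Q$, the given $X$--$Y$ path with endvertices $x$ and $y$.
\item Consider the vertices of $Q$ in order from $x$ to $y$, and look for an $X$--$Y$ path $P'$ that stays at distance at least $d$ from a suitable initial segment $Qq$ and final segment $q'Q$.
\item Whenever $P'$ is forced to come within distance $d$ of $Q$, either reroute by exchanging tails of $Q$ and $P'$, or record a vertex $z$ near the crossing point. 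In the second case, the failure of any rerouting shows that every $X$--$Y$ path meets $B_G(z,129d)$, which is \ref{itm:CoarseMengerFor2:Ball}.
\end{enumerate}

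The point I would have to verify is that every tail exchange keeps the endvertex $x$ on one path and $y$ on one path. The plan is to perform exchanges only at interior crossing points. Then the initial segment of $Q$ at $x$ and the terminal segment of $Q$ at $y$ always survive, possibly in different final paths, so $x$ and $y$ are among the four endvertices of the final $P_1,P_2$. If \ref{itm:CoarseMengerFor2:Ball} fails, the procedure must therefore end with paths whose endvertices include both $x$ and $y$.

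I expect the main obstacle to be the bookkeeping in this rerouting step. A swap near $x$ could replace the initial segment $xQq$ by a piece of $P'$ starting at some other vertex of $X$. To rule this out, I would first choose the initial and terminal segments of $Q$ kept "protected" to have length at least about $d$. I would then argue that the ball around any crossing inside a protected segment is a valid $z$ for \ref{itm:CoarseMengerFor2:Ball}: every $X$--$Y$ path avoiding it would yield a path far from $Q$ near that end. This possibly costs a constant in the radius, which the constant $129$ in \cref{thm:CoarseMengerFor2} is presumably chosen to absorb.
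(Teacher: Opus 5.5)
There is a genuine gap. Your proposal never argues the step that carries all the content. Your step~3 (either reroute, or find $z$ such that $B_G(z,129d)$ meets every $X$--$Y$ path) is the entire content of \cref{thm:CoarseMengerFor2}. You assert it, leaving $P'$ and the ``suitable'' segments unspecified and giving no argument for termination or for the radius.

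The mechanism you attribute to \cite{GP23,DistanceMengerForTwo} also cannot work. Exchanging tails of $Q$ and $P'$ at a common vertex gives two paths that share that vertex. Exchanging along a connecting path of length less than $d$ gives two paths still at distance less than $d$. So tail exchanges never create the required separation. As \cref{sec:ProofSketch} recalls, the known proofs delete $B_G(P,r)$ for some $r\gg d$ and analyse the components of $G-B_G(P,r)$. They then reroute $P$ \emph{itself} through a chain of well-behaved components to make room for a second path, so it is not true that only the second path is modified.

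Conversely, the obstacle you single out is vacuous: a tail exchange always preserves the set of four endvertices. Your `protected segment' step also fails. Knowing that some $X$--$Y$ path avoids $B_G(z,129d)$ for a vertex $z$ of $Q$ near $x$ says nothing about its distance to the rest of $Q$, and it does not produce a second path.

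What is actually needed is to run the full two-path argument with $P:=Q$ an \emph{arbitrary} $x$--$y$ path. The argument is usually phrased for a shortest $X$--$Y$ path, whereas the statement quantifies over all such $x,y$. You must check three things:
\begin{enumerate}
\item the argument goes through for a non-shortest $Q$;
\item it still yields radius $129d$ when it fails;
\item the two output paths are assembled so that one begins with an initial segment of $Q$ at $x$ and one ends with a final segment of $Q$ at $y$.
\end{enumerate}
This is what \cite[Corollary~8.14]{GP23} provides; the paper imports it without proof.

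You are right to aim for the dichotomy: either \ref{itm:CoarseMengerFor2:Ball} holds, or \ref{itm:CoarseMengerFor2:Paths} holds with $x,y$ among the endvertices. Read literally (``if \ref{itm:CoarseMengerFor2:Paths} holds''), the statement is false for $d\geq 2$, so the failure of \ref{itm:CoarseMengerFor2:Ball} must genuinely be used. For a counterexample, take the grid on $\{-L,\dots,L\}\times\{0,\dots,d\}$ with pendant vertices $x,y$ attached to $w=(0,\lfloor d/2\rfloor)$. Let $X$ consist of $x$ and the two left corners, and $Y$ of $y$ and the two right corners. The top and bottom rows witness \ref{itm:CoarseMengerFor2:Paths}. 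Now take two $X$--$Y$ paths at distance at least $d$ with $x,y$ among their endvertices. Either both contain $w$, or they are $xwy$ together with a path crossing column~$0$, which comes within distance $\lceil d/2\rceil<d$ of $w$.
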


	In fact, we will later employ the following corollary of \cref{thm:CoarseMengerFor2} and \cref{prop:CoarseMengerFor2:EndPoints}.
	
	\begin{corollary} \label{cor:CoarseMengerFor2}
		For every $d \in \N$, for every graph $G$ and $X, Y \subseteq V(G)$, and for every $X$--$Y$ path~$q_0 \dots q_n$ in~$G$ at least one of the following statements holds:
		\begin{enumerate}
			\item \label{itm:cor:CoarseMengerFor2:Paths} There exist two $X$--$Y$ paths $P_1, P_2$ such that $d_G(P_1, P_2) \geq d$ and such that $q_0,q_n$ are among the four endvertices of~$P_1$ and $P_2$.
			\item \label{itm:cor:CoarseMengerFor2:Ball} There exists $i \in [n]$ such that $B := B_G(q_i, 258\cdot d)$ intersects every $X$--$Y$ path, and such that there are two $X$--$B$ paths $P_1, P_2$ with $d_G(P_1, P_2) \geq d$, and one of $P_1, P_2$ starts in $q_0$.
		\end{enumerate}
	\end{corollary}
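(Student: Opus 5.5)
We argue on the given path $Q := q_0 \dots q_n$ and combine \cref{thm:CoarseMengerFor2} with \cref{prop:CoarseMengerFor2:EndPoints}, applied twice: once to the pair $X,Y$ and once to a pair $X,B$ where $B$ is a ball centred on~$Q$.

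\emph{Step 1.} Apply \cref{thm:CoarseMengerFor2} to $X,Y$ with parameter~$d$. If \ref{itm:CoarseMengerFor2:Paths} holds, then $Q$ is an $X$--$Y$ path with endvertices $q_0 \in X$ and $q_n \in Y$. So \cref{prop:CoarseMengerFor2:EndPoints} lets us choose $P_1,P_2$ with $q_0,q_n$ among their endvertices, which is \ref{itm:cor:CoarseMengerFor2:Paths}. Otherwise there is $z$ such that $B_G(z,129d)$ meets every $X$--$Y$ path. In particular it meets~$Q$, say in~$q_j$. By the triangle inequality, $B_G(z,129d) \subseteq B_G(q_j,258d)$. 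Hence the set of indices $i$ for which $B_i := B_G(q_i,258d)$ meets every $X$--$Y$ path is nonempty, and we let $i$ be the \emph{minimal} such index.

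\emph{Step 2.} Apply \cref{thm:CoarseMengerFor2} to $X$ and $B_i$ with parameter~$d$. Suppose first that outcome \ref{itm:CoarseMengerFor2:Ball} holds: some $B' := B_G(z',129d)$ meets every $X$--$B_i$ path. Every $X$--$Y$ path meets $B_i$, so its initial segment up to its first vertex in $B_i$ is an $X$--$B_i$ path. Consequently $B'$ meets every $X$--$Y$ path.

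Now apply this to $Q$ itself. Let $q_r$ be the first vertex of $Q$ in $B_i$, so that $Qq_r$ is an $X$--$B_i$ path and hence meets $B'$ in some $q_m$ with $m \le r$. We claim $r < i$ whenever $i \ge 1$. Indeed, $q_{i-1}$ is adjacent to $q_i$, so $q_{i-1} \in B_i$ (as $258d \ge 1$), and thus the first entry into $B_i$ occurs no later than index $i-1$. Therefore $m < i$. Moreover $B' \subseteq B_G(q_m,258d)$, so $B_m$ meets every $X$--$Y$ path, contradicting the minimality of~$i$.

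Hence outcome \ref{itm:CoarseMengerFor2:Paths} holds for $X,B_i$. Since $Qq_r$ is an $X$--$B_i$ path starting at $q_0$, \cref{prop:CoarseMengerFor2:EndPoints} yields two $X$--$B_i$ paths at distance at least~$d$, one of which starts in~$q_0$. Together with the fact that $B_i$ meets every $X$--$Y$ path, this is exactly \ref{itm:cor:CoarseMengerFor2:Ball}.

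\emph{Main obstacle.} The delicate point is the minimality argument in Step 2: we must ensure that the new ball $B'$ yields a \emph{strictly} earlier index on $Q$. This is handled by the doubling of radii, from $129d$ to $258d$, so that any $129d$-ball hit by $Q$ sits inside a $258d$-ball centred on~$Q$. It is also handled by the adjacency of $q_{i-1}$ and $q_i$, which forces the first entry of $Q$ into $B_i$ to occur before index~$i$.

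A boundary case remains: the minimal index could be $i = 0$, which lies outside $[n]$ if $[n] = \{1,\dots,n\}$. To avoid it, I would restrict the minimisation to indices $i \ge 1$, noting that $q_1 \in B_0$ gives $B_0 \subseteq B_G(q_1,258d+1)$. I would then absorb this unit loss either by slightly adjusting the radius in Step 1 (for instance using $d \ge 1$ and $129d+1 \le 258d - 129d$) or by the same minimality argument run from $i=1$.
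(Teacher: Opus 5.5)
Your main argument is the paper's own: apply \cref{thm:CoarseMengerFor2} and \cref{prop:CoarseMengerFor2:EndPoints} to $X,Y$, and take the minimal $i$ for which $B_G(q_i,258d)$ meets every $X$--$Y$ path. Then apply both results again to $X$ and $B_i$, and exclude the ball outcome by finding an earlier index. Your remark that $q_{i-1}\in B_i$ forces the first entry of $Q$ into $B_i$ before index $i$ is exactly what justifies the paper's terse ``As $k<i$''. Let $i$ range over $\{0,\dots,n\}$, which matches how the corollary is used later (with $z\in V(P)$). Then your argument proves the statement whenever the minimal index is at least $1$, i.e.\ whenever $B_G(q_0,258d)$ does not already meet every $X$--$Y$ path.

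The gap is in your final paragraph, and it cannot be closed. The inequality $129d+1\leq 258d-129d$ is false, since the right-hand side is $129d$. Restarting the minimality at $i=1$ also gives no contradiction when $B'$ meets $Qq_r$ only in $q_0$, because $m=0$ is then outside the range.

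The real issue is not whether $0\in[n]$. When the minimal index is $0$, the second application of \cref{thm:CoarseMengerFor2} can genuinely return a ball, and then the two $X$--$B$ paths need not exist. In fact the statement is false whenever $X$ has diameter less than $d$. For example, take $G$ a single edge $q_0q_1$, $X=\{q_0\}$, $Y=\{q_1\}$, $d\geq 1$. Any two $X$--$Y$ paths, and any two $X$--$B$ paths, start at vertices within distance $<d$ of each other. So neither (i) nor (ii) holds for any choice of $i$.

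The paper's proof has the same hole: in this situation $k=i=0$, and ``$k<i$'' fails. What your argument (and the paper's) actually establishes is the corollary with a third outcome: $B_G(q_0,258d)$ meets every $X$--$Y$ path, with no accompanying pair of far-apart paths. Any repair must add such an outcome, or a hypothesis excluding it, rather than adjust radii. That weaker form would then have to be carried through \cref{lem:2PathsInATube}.
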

	
	\begin{proof}
		Applying \cref{thm:CoarseMengerFor2} and \cref{prop:CoarseMengerFor2:EndPoints} yields either two $X$--$Y$ paths as in \ref{itm:cor:CoarseMengerFor2:Paths}, or some $z \in V(G)$ such that $B_G(z, 129\cdot d)$ intersects every $X$--$Y$ path. Since we are done in the former case, we may assume the latter. 
		Now first note that $B_G(z,129\cdot d)$ intersects the $X$--$Y$ path $q_0 \dots q_n$ in some vertex $q_i$, and thus $B_G(q_i, 258\cdot d) \supseteq B_G(z,129\cdot d)$ intersects every $X$--$Y$ path. We choose $i \in [n]$ minimal with this property, and claim that there are two $X$--$B_G(q_i, 258\cdot d)$ paths that are at distance at least $d$ from each other and such that one of them starts in $q_0$. Indeed, let $q_0 \dots q_j$ be the (unique) subpath of $q_0 \dots q_n$ that ends in $B := B_G(q_i, 258\cdot d)$, and that is internally disjoint from $B$. Applying \cref{thm:CoarseMengerFor2} and \cref{prop:CoarseMengerFor2:EndPoints} to $X$ and $Y := B$ and $d$ and to $x = q_0$ either yields the desired $X$--$B$ paths $P_1, P_2$, or it yields some $z' \in V(G)$ such that $B_G(z', 129\cdot d)$ intersects every $X$--$B$ path. In particular, it intersects $q_0 \dots q_j$ in some vertex $q_k$. But then $B_G(q_k, 258\cdot d)$ intersects every $X$--$B$ path, and hence every $X$--$Y$ path. As $k < i$, this contradicts the minimality of $i$.  
	\end{proof}

	\section{Sketch of proof} \label{sec:ProofSketch}

	All known proofs of the coarse Menger conjecture for $k=2$ \cite{DistanceMengerForTwo,GP23,CoarseMengerCounterex} roughly follow the same strategy. They start by taking a (shortest) $X$--$Y$ path~$P$, deleting balls around $P$ of some suitable radius $r >> d$, and then analysing the components of $G-B_G(P,r)$. If there is no set of small radius that separates $X$ from $Y$, then they find a set of components of $G-B_G(P,r)$ that are `well-behaved' (see e.g.\ \cite[Figure~4]{DistanceMengerForTwo}), and they conclude the proof by rerouting $P$ through some of these components, making space for a second path from $X$ to $Y$ at distance at least $d$ from $P$ (see e.g.\ \cite[Figure~6]{DistanceMengerForTwo} or \cite[Figure~6]{CoarseMengerCounterex}).
	
	Early hopes that \cref{conj:CoarseMenger} might be true for all $k \in \N$ have (partly) built on the idea that this proof strategy might generalise to larger $k$, using induction to find $k-1$ disjoint $X$--$Y$ paths $P_1, \dots, P_{k-1}$ pairwise at distance at least~$D >> 2r$. One obvious difficulty (amongst others) is that components of $G-\big(\bigcup_{i \in [k-1]} B_G(P_i, r)\big)$ might now attach to several $B_G(P_i, r)$'s, which makes analysing and rerouting the paths through them more challenging. 
	In the end, the counterexamples of Nguyen, Scott, and Seymour \cite{CoarseMengerCounterex,NewCounterexToCoarseMenger} showed that this idea was doomed to fail.
	
	However, if the cycle space of~$G$ is generated by cycles of bounded length, then the situation is simpler for two reasons (see \cref{fig:Hypertree}), paving the way for a proof by induction (where $P_1, \dots, P_{k-1}$ are $X$--$Y$ paths pairwise at distance at least~$D >> r >> d$, as given by the induction hypothesis):
	\smallskip
	
	[\cref{lem:Hypertree}]: First, for any two $i \neq j \in [k-1]$, there is at most one component of $G-\big(\bigcup_{n \in [k-1]} B_G(P_n,r)\big)$ that attaches to both $B_G(P_i,r)$ and $B_G(P_j,r)$. Moreover, the components of $G-\big(\bigcup_{n \in [k-1]} B_G(P_n,r)\big)$ connect the sets $B_G(P_i,r)$ in a `tree-like' way (see \cref{fig:Hypertree}).
	\smallskip
	
	[\cref{lem:kappa/2NhoodIsConnected}]: Second, for every component $C$ attaching to $B_G(P_i,r)$ and $B_G(P_j,r)$, say, both the $\kappatwo$-neighbourhood~$N_{i,C}$ of $B_G(P_i,r)$ in $C$ and the $\kappatwo$-neighbourhood~$N_{j,C}$ of $B_G(P_j,r)$ in~$C$ are connected, and at distance at least~$d$ from each other (indicated in grey in \cref{fig:Hypertree}). Therefore, we can reroute $P_i$ through~$N_{i,C}$, and at the same time reroute $P_j$ through $N_{j,C}$ without $P_i$ and $P_j$ getting too close.
	\begin{figure}[ht]
		\centering
		\includegraphics[width=0.67\linewidth]{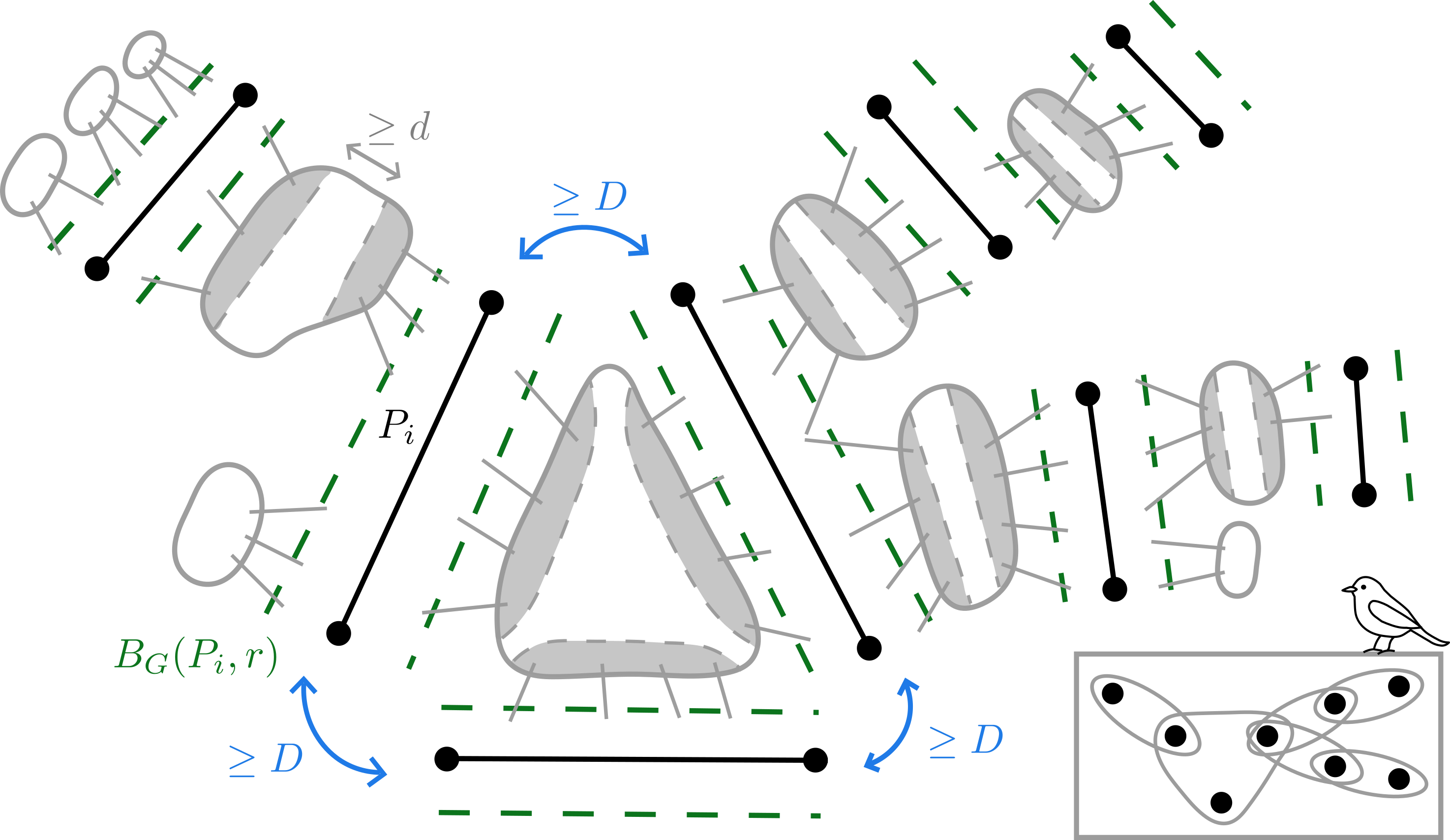}
		\vspace{0.5em}
		\caption{Depicted are paths $P_i$, pairwise at distance at least~$D$. The components~$C$ of $G-\big(\bigcup B_G(P_i,r)\big)$, for $r << D$, connect the sets $B_G(P_i,r)$ in a `tree-like' way: The hypergraph with vertex set $\{P_1, \dots, P_{k-1}\}$ in which the components~$C$ are turned into hyper-edges is acyclic (see right corner).\\
			Moreover, for every $P_i$ and every component~$C$ attaching to $B_G(P_i,r)$, the $\kappatwo$-neighbourhood of $B_G(P_i,r)$ in $C$ (indicated in grey) is connected, and at distance at least~$d$ from the other $\kappatwo$-neighbourhoods.}
		\label{fig:Hypertree}
	\end{figure}
	
	\smallskip

	[\cref{lem:2PathsInATube}]: We use \cref{thm:CoarseMengerFor2} and \cref{lem:kappa/2NhoodIsConnected} to show that, for every $i \in [k-1]$, there are either two $X$--$Y$~paths $W^i_1,W^i_2$ at distance at least~$d$ that avoid all $B_G(P_j,r)$ for $j \neq i$, or some small-radius ball~$B_i$ hits all $X$--$Y$ paths in $G$ whose endvertices are `close' to $P_i$ (see \cref{lem:2PathsInATube} and \cref{fig:2PathsInATube}). 
	
	If, for some $i \in [k-1]$, we find two such paths $W^i_1,W^i_2$, then $P_1, \dots, P_{i-1}, W^i_1, W^i_2, P_{i+1}, \dots, P_{k-1}$ are $k$~disjoint $X$--$Y$~paths, and they are pairwise at distance at least~$d$, thus concluding the proof in this case.
	
	Otherwise, we find for every $i \in [k-1]$ some small-radius ball~$B_i$ that hits all $X$--$Y$ paths in $G$ whose endvertices are `close' to $P_i$. By choosing the balls $B_i$ as `close' to~$X$ as possible, we can ensure that there are two $X$--$B_i$ paths $Q^i_1,Q^i_2$ at distance at least~$d$ that avoid all $B_G(P_j,r)$ with $j \neq i$ (see \cref{lem:2PathsInATube}~\ref{itm:2PathsInATube:2} and \cref{fig:2PathsInATube} (right)).
	\smallskip

	[\cref{subsec:ProofOfCoarseMenger:BoundedCycleSpace}]:  We choose the balls~$B_i$ from the application of \cref{lem:2PathsInATube} so that they are as `close' to~$Y$ as possible while still having two $X$--$B_i$ paths $Q^i_1,Q^i_2$, one of which is allowed to use one other set $B_G(P_j,r)$ (indicated in blue in \cref{fig:ProofSketch:FinalStep}). We then conclude the proof of \cref{main:CoarseMenger:BoundedCycleSpace} by showing that either these balls~$B_i$ hit all $X$--$Y$ paths, or we can find the desired $k$ disjoint $X$--$Y$ paths: If the balls $B_i$ do not hit all $X$--$Y$ paths, then there is a component~$C$ of $G-\big(\bigcup_{i \in [k-1]} B_G(P_i,r)\big)$ that attaches to some $B_G(P_i,r)$ `before'~$B_i$ and to some $B_G(P_j,r)$ `behind' $B_j$ (indicated in grey in \cref{fig:ProofSketch:FinalStep} (left)). By using \cref{lem:2PathsInATube}, this either yields a new ball $B'_j$ contradicting the optimality of $B_j$ (\cref{fig:ProofSketch:FinalStep} (left)), or we can reroute the paths $P_i$ to obtain $k$ disjoint $X$--$Y$ paths, pairwise at distance at least~$d$ (see  \cref{fig:ProofSketch:FinalStep} (right)). 
	\begin{figure}[ht]
		\begin{subfigure}[b]{0.5\linewidth}
			\centering
			\includegraphics[width=0.65\linewidth]{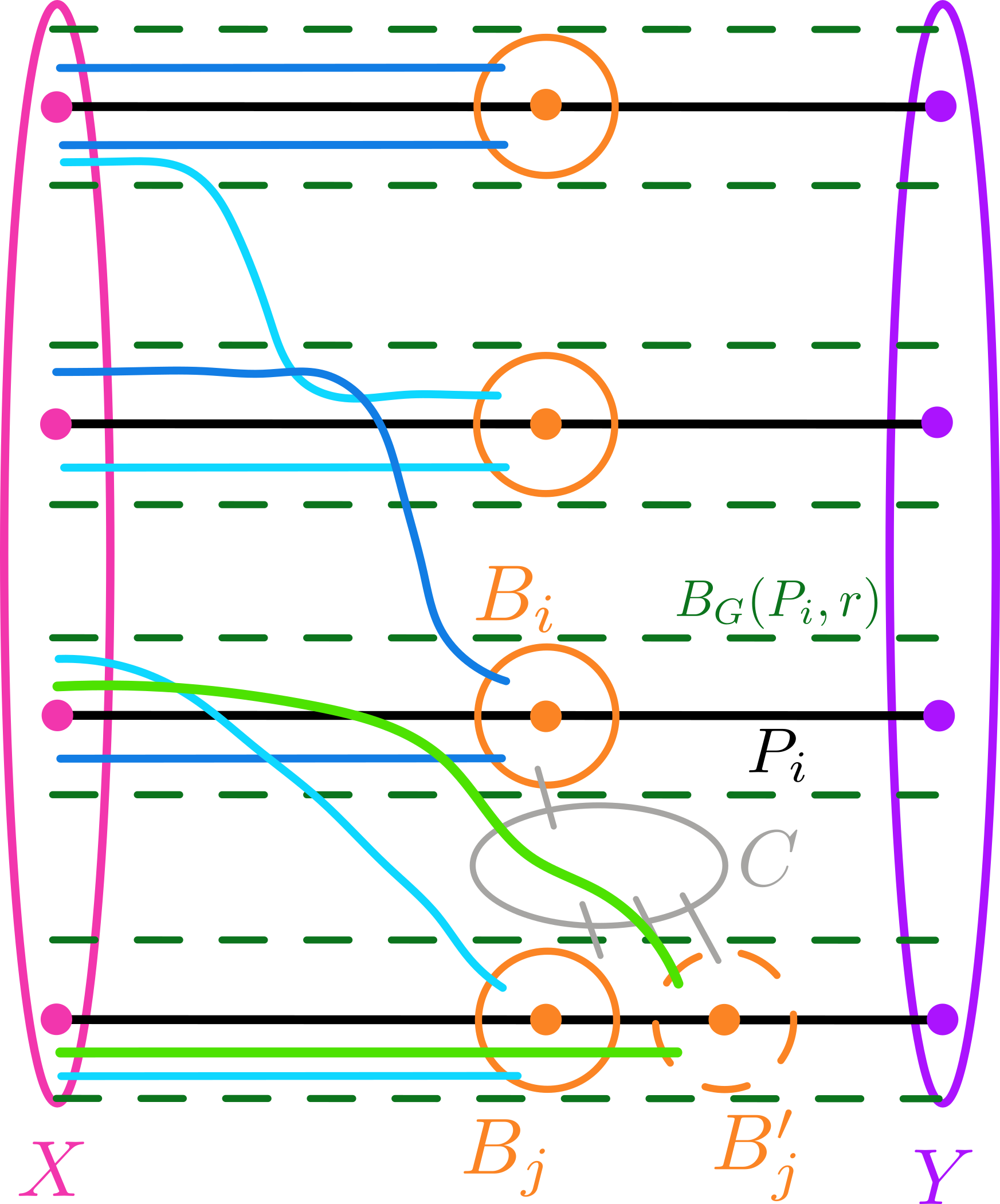}
		\end{subfigure}
		\begin{subfigure}[b]{0.5\linewidth}
			\centering
			\includegraphics[width=0.65\linewidth]{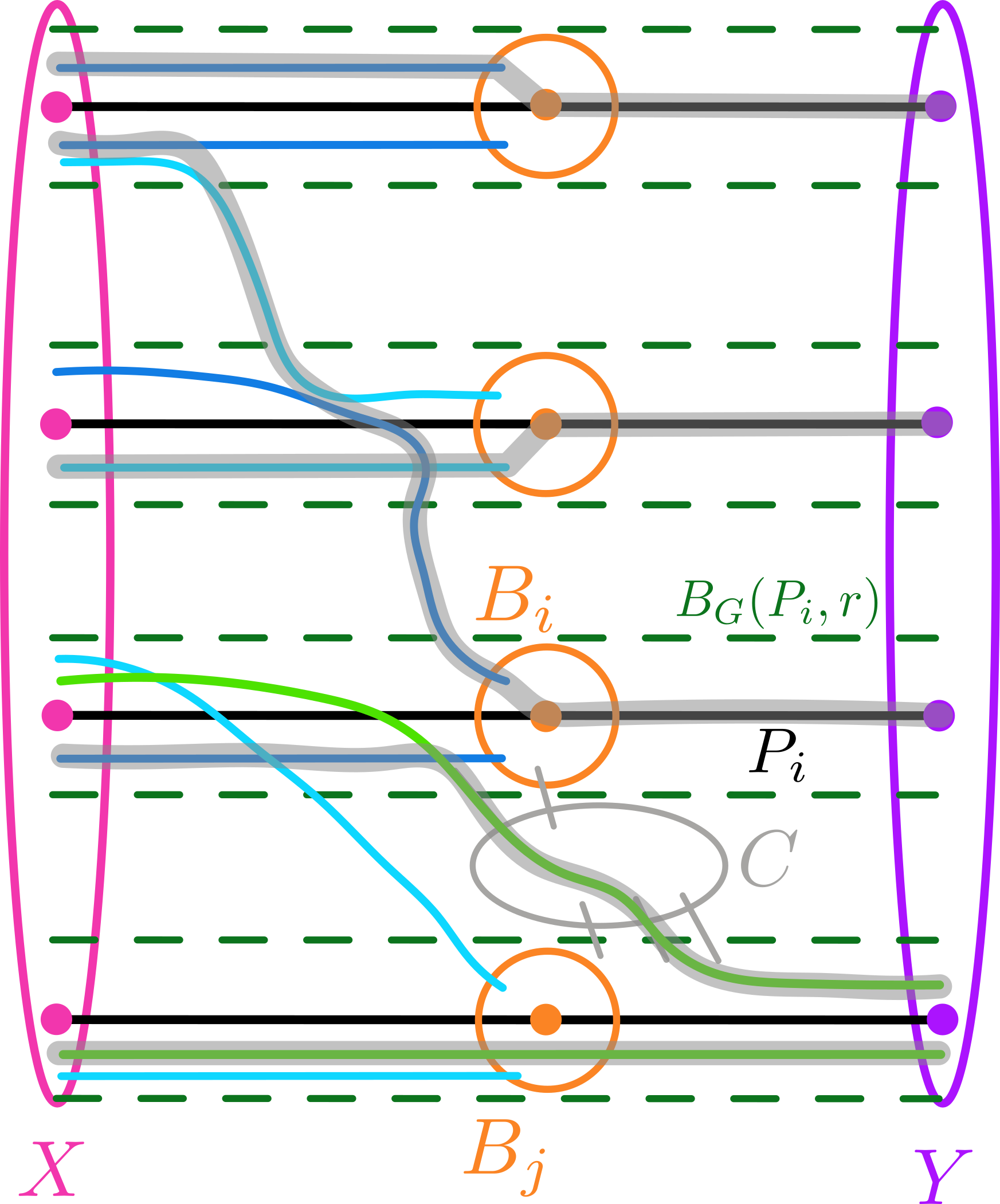}
		\end{subfigure}
		\vspace{0.2em}
		\caption{By the induction hypothesis, we find $k-1$ $X$--$Y$ paths $P_i$ that are pairwise at distance at least~$D$. \cref{lem:2PathsInATube} yields for every $P_i$ some small-radius ball $B_i$ (depicted in orange) and two $X$--$B_i$~paths (in blue) that are pairwise at distance at least $d$, one of which avoids all other $B_G(P_j,r)$. If the~$B_i$ do not hit all $X$--$Y$ paths, then there is a component~$C$ that attaches `before' some~$B_i$ and `behind' some~$B_j$ (in grey). Invoking \cref{lem:2PathsInATube} again yields either a `better' ball~$B'_j$ (left), or two $X$--$Y$ paths (right, in green), which we can use to find the desired $k$ $X$--$Y$ paths that are pairwise at distance at least~$d$ (right, in grey).}
		\label{fig:ProofSketch:FinalStep}
	\end{figure}

	\section{The proof} \label{sec:CoarseMenger:BoundedCycleSpace}
	
	In this section we prove \cref{main:CoarseMenger:BoundedCycleSpace}. 
	For this, we first show two preparatory lemmas in \cref{subsec:PreparatoyLemmas} and then prove \cref{main:CoarseMenger:BoundedCycleSpace} in \cref{subsec:ProofOfCoarseMenger:BoundedCycleSpace}.
	
	\subsection{Preparatory lemmas} \label{subsec:PreparatoyLemmas}
	
	We begin by showing that if the cycle space of a graph $G$ is generated by cycles of length at most~$\kappa$, and $U_1, \dots, U_n$ are connected vertex sets of $G$ that are pairwise at distance at least $\left\lfloor\frac{\kappa}{2}\right\rfloor$ from each other, then the components of $G-\bigcup_{i \in [n]} U_i$ connect the sets $U_i$ in a `tree-like' way (see \cref{fig:Hypertree} for an illustration, where the sets $B_G(P_i,r)$ play the role of the sets $U_i$).
	
	\begin{lemma} \label{lem:Hypertree}
		Let $n, r \in \N$, let $G$ be a graph whose cycle space is generated by cycles of length at most $\kappa \in \N$, and let $U_1, \dots, U_n$ be connected subsets of~$V(G)$ such that $d_G(U_i, U_j) \geq \left\lfloor \frac{\kappa}{2}\right\rfloor$ for all $i \neq j \in [n]$. 
		
		Let $H$ be the (multi-)hypergraph with vertex set $\{U_1, \dots, U_n\}$ and edge set $\big\{e_C : C \in \cC\big(G-(\bigcup_{i \in [n]} U_i)\big)\big\}$ where $e_C$ is the set of precisely those $U_i$, with $i \in [n]$, that send an edge to $C$.
		Then $H$ is acyclic. 
	\end{lemma}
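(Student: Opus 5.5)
Acyclicity of a hypergraph here means Berge-acyclic: no Berge cycle $U_{i_1}, e_{C_1}, U_{i_2}, e_{C_2}, \dots, U_{i_m}, e_{C_m}, U_{i_1}$ with $m \geq 2$, distinct vertices $U_{i_j}$ and distinct edges $e_{C_j}$. This includes the case $m=2$, where two distinct components both attach to the same two sets. The plan is to argue by contradiction using only \cref{lem:kappa/2NhoodIsConnected}. Suppose such a cycle exists. The first step is to apply \cref{lem:kappa/2NhoodIsConnected} to a suitable connected subgraph $Y$ so that two of the components are merged into a single component $C$ of $G-Y$. The conclusion that $C[\partial_G C, \lfloor\frac{\kappa-2}{2}\rfloor]$ is connected then contradicts the separation $d_G(U_i,U_j)\geq\lfloor\kappa/2\rfloor$.

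Concretely, fix the edge $e_{C_m}$, which contains $U_{i_m}$ and $U_{i_1}$. Let $Y$ be the subgraph induced by
\[ U_{i_1}\cup V(C_1)\cup U_{i_2}\cup \dots \cup V(C_{m-1})\cup U_{i_m}. \]
This set is connected, because each $U_{i_j}$ is connected and $C_j$ sends edges to both $U_{i_j}$ and $U_{i_{j+1}}$. The component $C_m$ is disjoint from $Y$, and it is connected in $G-Y$. Let $D$ be the component of $G-Y$ containing $C_m$. Then $D$ attaches to $Y$, and $\partial_G D$ contains a neighbour $a\in V(C_m)$ of $U_{i_1}$ and a neighbour $b\in V(C_m)$ of $U_{i_m}$.

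By \cref{lem:kappa/2NhoodIsConnected}, $D[\partial_G D, \lfloor\frac{\kappa-2}{2}\rfloor]$ is connected. The key step is to show that this neighbourhood splits. Partition $\partial_G D$ according to which part of $Y$ a vertex is adjacent to. Since $D\subseteq G-Y$ and the $C_j$ are components of $G-\bigcup U_i$, any vertex of $D$ adjacent to $V(C_j)$ would have to lie in $C_j$ or in some $U_i$. So a vertex of $D$ adjacent to $Y$ that is not in any $U_l$ is adjacent only to sets $U_{i_j}$. Boundary vertices lying inside some $U_l$ with $l\notin\{i_1,\dots,i_m\}$ need care: such a $U_l$ is connected and disjoint from $Y$, so it lies wholly inside one component of $G-Y$, and it can only touch $Y$ through $U_{i_j}$'s, since it is separated from the $C_j$'s. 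Hence every vertex of $\partial_G D$ is adjacent to some $U_{i_j}$.

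Now, if $D[\partial_G D, t]$ with $t=\lfloor\frac{\kappa-2}{2}\rfloor$ is connected, take a path in it from $a$ to $b$. Every vertex on this path is within distance $t$ in $D$ of a vertex adjacent to some $U_{i_j}$, so it is within distance $t+1$ of $\bigcup_j U_{i_j}$. Along the path, the label of a nearest $U_{i_j}$ must change from $i_1$ to $i_m$ at some edge $xy$. This gives
\[ d_G(U_{i_j},U_{i_{j'}})\leq 2(t+1)+1 . \]
That bound is too weak, so I would sharpen the count using the structure of the lemma. The sets $B_D(N(U_{i_j})\cap V(D), t)$ are the natural pieces, and connectivity means two of them for distinct labels either share a vertex or are joined by an edge. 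This yields $d_G(U_{i_j},U_{i_{j'}})\leq 2t+2+1$ with adjacent pieces, or $2t+2$ with a shared vertex. Comparing with $\lfloor\kappa/2\rfloor$, this is exactly the delicate point.

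The main obstacle is this numerical matching. I expect that the right move is to apply the lemma not to $Y$ itself but to a shrunk or enlarged version, or to argue directly via short generating cycles in a way mirroring the proof of \cite[Lemma~5.1]{AHAsymptoticGrids}. Take a cycle formed by a path from $U_{i_1}$ through $C_m$ to $U_{i_m}$, closed up through $Y$, and write its edge set as a sum of cycles of length at most $\kappa$. Some short cycle must contain an edge of $C_m$ adjacent to the part of the boundary near $U_{i_1}$ and also reach the part near $U_{i_m}$ or the rest of $Y$. Its length at most $\kappa$ then forces two distinct $U$'s within distance $\lfloor\kappa/2\rfloor-1$, a contradiction. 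I would first try this direct parity argument, namely that the number of edges of each short cycle crossing between the "$U_{i_1}$-side" and the rest of the cut must be even, and only fall back on fine-tuning the radii if it fails.
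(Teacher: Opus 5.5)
There is a genuine gap: the contradiction is never actually derived, and the set-up you chose cannot produce it.

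Taking $Y$ to be the whole cycle except $C_m$ makes the component $D\supseteq C_m$ of $G-Y$ border at least two different sets, $U_{i_1}$ and $U_{i_m}$. So at the scale of the hypothesis, connectivity of $D[\partial_G D,t]$ with $t=\lfloor\frac{\kappa-2}{2}\rfloor$ is not contradictory: the $t$-neighbourhoods of the two sides may simply meet halfway. All you can extract is $d_G(U_{i_j},U_{i_{j'}})\leq 2t+3=2\lfloor\kappa/2\rfloor+1$, which is compatible with a separation of $\lfloor\kappa/2\rfloor$. This is a factor-two loss, not a rounding issue, so no tuning of radii repairs it. The cycle-space parity argument in your last paragraph is only a sketch, and the bound $\lfloor\kappa/2\rfloor-1$ it promises is unjustified.

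Separately, your claim that every vertex of $\partial_G D$ is adjacent to some $U_{i_j}$ is false. A set $U_l$ off the cycle may lie in $D$ and be adjacent to some $C_j$ with $j<m$; nothing prevents $U_l\in e_{C_j}$. Its vertices next to $C_j$ are then boundary vertices of $D$ far from all the $U_{i_j}$.

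The missing idea, which is the paper's proof, is to shrink $Y$ all the way to a \emph{single} set of the cycle. Then the neighbourhood given by \cref{lem:kappa/2NhoodIsConnected} surrounds only one $U$.
Take $Y=G[U_{i_1}]$. The set $V(C_1)\cup U_{i_2}\cup V(C_2)\cup\dots\cup U_{i_m}\cup V(C_m)$ is connected and disjoint from $U_{i_1}$. Hence $C_1$ and $C_m$ lie in a common component $\widetilde{C}$ of $G-U_{i_1}$.
Pick $v\in V(C_1)$ and $v'\in V(C_m)$ adjacent to $U_{i_1}$. The lemma gives a $v$--$v'$ path $Q$ in $\widetilde{C}[\partial_G\widetilde{C},t]$. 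Each vertex of $Q$ is within distance $t+1=\lfloor\kappa/2\rfloor$ of $U_{i_1}$, hence lies in no other $U_k$. So $Q$ stays inside one component of $G-\bigcup_k U_k$, forcing $C_1=C_m$, a contradiction.

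Be aware that this last step needs $d_G(U_{i_1},U_k)>\lfloor\kappa/2\rfloor$, and at equality the statement is in fact false. In the $4$-cycle $abcd$ with $\kappa=4$, $U_1=\{a\}$ and $U_2=\{c\}$, the components $\{b\}$ and $\{d\}$ give two parallel hyperedges $\{U_1,U_2\}$. So your sense that the count is delicate is justified. The remedy, though, is the strict inequality rather than a different choice of radii, and the strict inequality is all the paper uses later: there the sets are at distance at least $d+\kappa+1$.
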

	
	Note that acyclic here implies that $H$ is a hypertree, and that no two of its (hyper)edges intersect in more than one vertex.
	In particular, \cref{lem:Hypertree} implies that there are at most~$n-1$ components of $G-\big(\bigcup_{i \in [n]} U_i\big)$ that attach to more than one~$U_i$, and that for any two distinct $U_i \neq U_j$, there is at most one component of $G-\big(\bigcup_{i \in [n]} U_i\big)$ that attaches to $U_i$ and to $U_j$.
	
	\begin{proof}
		Suppose for a contradiction that $H$ contains a cycle\footnote{That means $D$ is an alternating sequence $(d_1e_1d_2, \dots, d_n,e_n,d_1)$ of distinct vertices $d_i$ and (hyper)edges $e_i$ such that $d_i$ is contained in $e_i$ and $e_{i+1}$.}~$D$.
		Without loss of generality, assume that $U_1 \in V(D)$, and let $C \neq C' \in \cC$ such that $e_C, e_{C'}$ are the two edges of~$D$ that are incident with~$U_1$. Let further~$U_i$ and~$U_j$ be the neighbours of~$U_1$ in~$D$, i.e.\ $U_i, U_j$ are incident with $e_C$ and $e_{C'}$, respectively (note that possibly $U_i = U_j$ if $e_C$ and $e_{C'}$ intersect in two vertices of $H$, in which case $D$ is a $2$-cycle). 
		Since $D$ is a cycle, $D-U_1$ is still connected, and hence 
		\[
		U := \bigcup_{U_i \in V(D) \setminus \{U_1\}} U_i \cup \bigcup_{e_{C''} \in E(D)} V(C'') \subseteq V(G)\setminus U_1
		\]
		is connected in $G$. 
		Thus, there exists a component $\widetilde{C}$ of $G-U_1$ such that $V(C), V(C') \subseteq U \subseteq V(\widetilde{C})$. 
		
		Let $v \in \partial_G C$ and $v' \in \partial_G C'$ send edges to $U_1$. Then $v, v' \in \partial_G \widetilde{C}$, and hence, by \cref{lem:kappa/2NhoodIsConnected}, there exists an $v$--$v'$ path $Q$ in $\widetilde{C}\left[\partial_G \widetilde{C}, \kappatwo\right]$. 
		Since $d_G(U_1, U_k) \geq \left\lfloor \frac{\kappa}{2}\right\rfloor$ for all $k \in \{2, \dots, n\}$, it follows that $Q$ is contained in some component $C''$ of $G - \big(\bigcup_{i \in [n]} U_i\big)$. As $Q$ starts in $C$ and ends in $C'$, this implies that $C = C'' =  C'$, which contradicts $C \neq C'$.
	\end{proof}
	
	The next lemma is essentially a variant of the coarse Menger theorem for two paths (more precisely of \cref{cor:CoarseMengerFor2}) for graphs whose cycle space is generated by cycles of bounded length: Given $X, Y \subseteq V(G)$ and an $X$--$Y$ path~$P$ in~$G$, if $X$ and $Y$ cannot be separated by a small-radius ball, then there are two $X$--$Y$~paths that are far apart from each other but at the same time `close' to the path $P$ (see \cref{fig:2PathsInATube}):

	\begin{lemma} \label{lem:2PathsInATube}
		Let $d,r \in \N$ with $r \geq 258\cdot d$, and let $G$ be a graph whose cycle space is generated by cycles of length at most $\kappa \in \N$. Let $X, Y,O \subseteq V(G)$, and let $P$ be an $X$--$Y$ path in~$G$ with $d_G(P,O) > r+\kappatwo$.
		Let~$\cC$ be the set of components of $G - \big(B_G(P,r) \cup O\big)$, and set $A' := A \cap \big(B_G(P,r) \bigcup_{C \in \cC}  V(C)\big)$ for $A \in \{X,Y\}$.
		Then at least one of the following statements holds:
		\begin{enumerate}[label=\rm{(\roman*)}]
			\item \label{itm:2PathsInATube:1} There are two $X'$--$Y'$ paths $Q_1, Q_2$ such that 
			\begin{enumerate}[label=\rm{(i.\alph*)}]
				\item \label{itm:2PathsInATube:1a} $d_G(Q_1, Q_2) \geq d$, and every component in $\cC$ is at distance at most~$d$ from at most one of $Q_1,Q_2$,
				\item \label{itm:2PathsInATube:1b} one of $Q_1,Q_2$ starts in $X \cap B_G(P,r)$, and one of $Q_1,Q_2$ ends in $Y \cap B_G(P,r)$, and
				\item \label{itm:2PathsInATube:1c} $Q_i$, for $i \in [2]$, is contained in $G\left[P, r+\kappatwo\right] \cup C_{x_i} \cup C_{y_i}$ where $C_{x_i}, C_{y_i} \in \cC$ are the components containing the endvertices $x_i,y_i$ of $Q_i$, respectively (where $C_{x_i},C_{y_i} := \emptyset$ if $x_i,y_i \in B_G(P,r)$).
			\end{enumerate} 
			\item \label{itm:2PathsInATube:2} There is a vertex $z \in V(P)$ such that $B:=B_G(z, 258\cdot d)$ intersects every $X'$--$Y'$ path and such that there are two $X'$--$B$ paths satisfying \ref{itm:2PathsInATube:1a} and \ref{itm:2PathsInATube:1c} such that one of them starts in $X\cap B_G(P,r)$.
		\end{enumerate}
	\end{lemma}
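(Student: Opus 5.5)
The plan is to apply \cref{cor:CoarseMengerFor2} not in $G$ but in an auxiliary graph obtained by contracting each component of $\cC$ to a bounded-size gadget, so that the two-path theory can be run while distances stay controlled.

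\textbf{Setting up the auxiliary graph.} Let $r' := r+\kappatwo$. I work in $G' := G\big[P, r'\big] \cup \bigcup_{C \in \cC} C$ with $O$ deleted. By the hypothesis $d_G(P,O) > r'$, no vertex of $G[P,r']$ lies in $O$. The key observation concerns the neighbourhood of $B_G(P,r)$ inside a component. For every $C \in \cC$, the set $N_C := B_C(\partial_G C, \kappatwo)$ lies in $G[P,r']$. Moreover, $N_C$ is connected: apply \cref{lem:kappa/2NhoodIsConnected} to the connected subgraph $G[P,r]$ and the component of $G - B_G(P,r)$ containing $C$. Since that component might also contain vertices of $O$, I first check that the $\kappatwo$-neighbourhood of $\partial C$ does not reach $O$, which again follows from $d_G(P,O)>r'$.

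\textbf{Defining the contracted graph.} Let $G''$ be the graph obtained from $G[P,r']$ by adding, for each $C\in\cC$ that meets $X$ or $Y$, a single new vertex $c_X$ (respectively $c_Y$) joined to all of $N_C$. Set $X'' := (X\cap B_G(P,r)) \cup \{c_X\}$ and $Y'' := (Y\cap B_G(P,r)) \cup \{c_Y\}$. Every $X'$--$Y'$ path in $G$ projects to an $X''$--$Y''$ walk in $G''$: each maximal segment inside some $C$ between two visits to $B_G(P,r)$ is replaced by a path through the connected set $N_C$, and an initial or final segment in $C$ is replaced by $c_X$ or $c_Y$. Conversely, an $X''$--$Y''$ path in $G''$ lifts to an $X'$--$Y'$ path in $G$ lying in $G[P,r'] \cup C_{x}\cup C_y$, which is exactly \ref{itm:2PathsInATube:1c}. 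The path $P$ itself gives an $X''$--$Y''$ path $q_0\dots q_n$ with $q_0 \in X\cap B_G(P,r)$.

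\textbf{Applying the corollary.} I apply \cref{cor:CoarseMengerFor2} in $G''$ with distance parameter roughly $d$, inflated to $d+2\kappatwo+2$ so that distances survive the contraction.
\begin{itemize}
\item In outcome \ref{itm:cor:CoarseMengerFor2:Paths}, the lifts give \ref{itm:2PathsInATube:1}. Condition \ref{itm:2PathsInATube:1b} holds because $q_0$ and $q_n$ are among the endvertices.
\item In outcome \ref{itm:cor:CoarseMengerFor2:Ball}, the ball lies around some $q_i\in V(P)$. It separates in $G''$, hence also separates in $G$ by the projection argument, giving \ref{itm:2PathsInATube:2}.
\end{itemize}
The radius bookkeeping needs $r\ge 258 d$ so that a ball of radius $258d$ around a vertex of $P$ in $G''$ matches the corresponding ball in $G$ up to the relevant region. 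If the constant $258\cdot d$ must be exact, I would instead rescale $d$ and absorb the additive $\kappatwo$ terms into it.

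\textbf{Main obstacle.} The hardest step is relating distances in $G''$ to distances in $G$, together with the clause that every $C\in\cC$ is within distance $d$ of at most one $Q_i$. The difficulty is that two lifted paths far apart in $G''$ could become close in $G$ through a shortcut inside some $C$. I expect to handle this using the fact that any such shortcut enters and leaves $C$ via $\partial C$. In $G''$, any two vertices of $N_C$ are at distance at most $2$ through the gadget vertex, or I add a clique on $N_C$. So if each component $C$ is also contracted to a vertex adjacent to $N_C$, then $d_{G''}$ is bounded by $d_G$ up to an additive constant on $G[P,r']$. The same device gives the ``at most one $Q_i$ near $C$'' clause directly.
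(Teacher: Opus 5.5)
Your overall plan matches the paper's: collapse what lies outside $B_G(P,r)$, apply \cref{cor:CoarseMengerFor2} to $P$ there, and lift back using \cref{lem:kappa/2NhoodIsConnected}. The gap is in your choice of auxiliary graph, and it is exactly the ``main obstacle'' you flag and leave unresolved.

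Your $G''$ keeps the collar $G[P,r']$ as genuine vertices, deletes $O$, and has gadgets only for components meeting $X\cup Y$. So distances in $G''$ are not lower bounds for distances in $G$: shortcuts leaving $G[P,r']$, through gadget-free components or through $O$, are invisible. Your repair is a gadget for every component, adjacent to all of $N_C$. Even then, a lifted path runs through collar vertices that are not on the $G''$-path but only adjacent to a gadget on it. As you concede, this bounds $d_G(Q_1,Q_2)$ below by $d_{G''}(Q''_1,Q''_2)$ only up to an additive constant. You must therefore run the corollary with some $d^*>d$. Outcome \ref{itm:cor:CoarseMengerFor2:Ball} then gives a ball of radius $258d^*$, and identifying it with a $G$-ball needs $r\geq 258d^*$. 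The lemma assumes only $r\geq 258d$ and demands radius exactly $258d$. No $d^*$ satisfies both $d^*\le d$ (for the ball) and $d^*>d$ (to absorb the loss), so ``rescaling $d$'' cannot help, and you would prove a weaker lemma.

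The missing idea is to contract each component $\hat C$ of $G-B_G(P,r)$ \emph{as a whole} (collar, deep part, and any vertices of $O$ in it) to a single vertex $v_{\hat C}$. Put $v_{\hat C}$ in $X''$ (resp.\ $Y''$) exactly when $\hat C$ meets $X'$ (resp.\ $Y'$), and keep nothing else outside $B_G(P,r)$; this is what the paper does.
\begin{itemize}
\item The quotient map is $1$-Lipschitz.
\item Every detour inserted when lifting maps to $v_{\hat C}$: this holds for detours through $\hat C[\partial_G\hat C,\kappatwo]$, which is connected and avoids $O$, and for those through $C_w$ at an end.
\item Hence each lift maps exactly onto $V(Q'_i)$, and $d_G(Q_1,Q_2)\geq d_{G'}(Q'_1,Q'_2)\geq d$ with no loss, so the corollary is applied with $d$ itself.
\item A path in $G'$ from $P$ needs at least $r+1>258d$ edges to reach a contracted vertex, so $B_{G'}(z,258d)=B_G(z,258d)$ under the given hypothesis.
\end{itemize}

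This construction also closes two smaller holes in yours.
\begin{itemize}
\item An $X'$--$Y'$ path may run through $O$ between two visits to $B_G(P,r)$, which your segment-by-segment projection ``inside some $C$'' does not cover. You need that all of $\partial_G\hat C$ lies in one element of $\cC$, which holds because the connected set $\hat C[\partial_G\hat C,\kappatwo]$ avoids $O$.
\item Collar vertices of $X$ are not in your $X''$, so a $G''$-path may pass through them internally. Its lift must then be shortened, and $p_0$ or $p_n$ can stop being an endvertex, breaking \ref{itm:2PathsInATube:1b}. In $G'$, an internal $v_{\hat C}$ never meets $X'\cup Y'$.
\end{itemize}

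Finally, the second clause of \ref{itm:2PathsInATube:1a} does not follow ``directly''. A component within distance $d$ of both lifts only forces the collapsed paths to within about $2d$ of each other. The paper's argument likewise only derives $d_G(Q_1,Q_2)\geq d$ at this point, so this is not where you differ from it.
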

	
	\begin{figure}[ht]
		\centering
		\begin{subfigure}[b]{0.48\linewidth}
			\centering
			\includegraphics[scale=0.45]{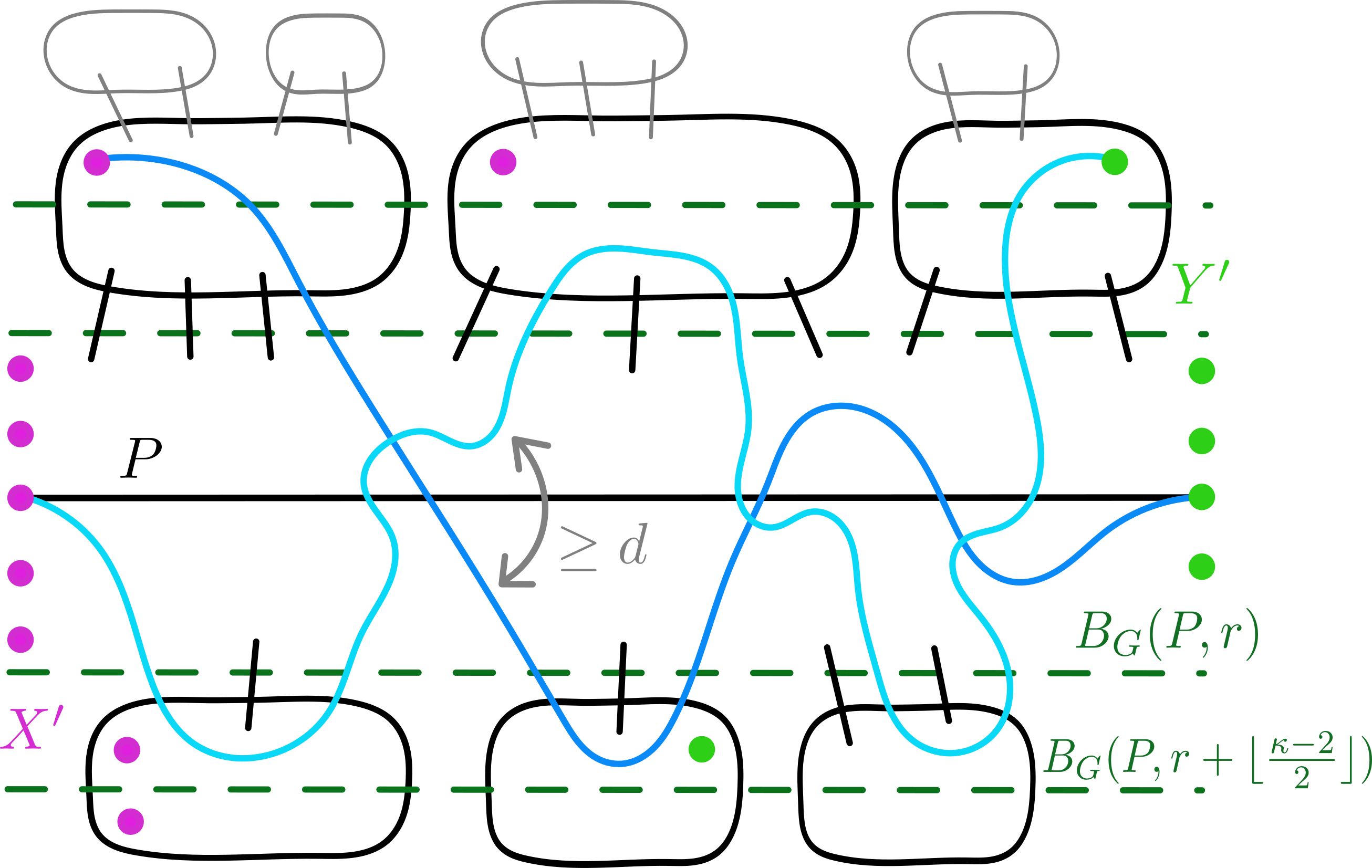}
		\end{subfigure}
		\begin{subfigure}[b]{0.48\linewidth}
			\centering
			\includegraphics[scale=0.45]{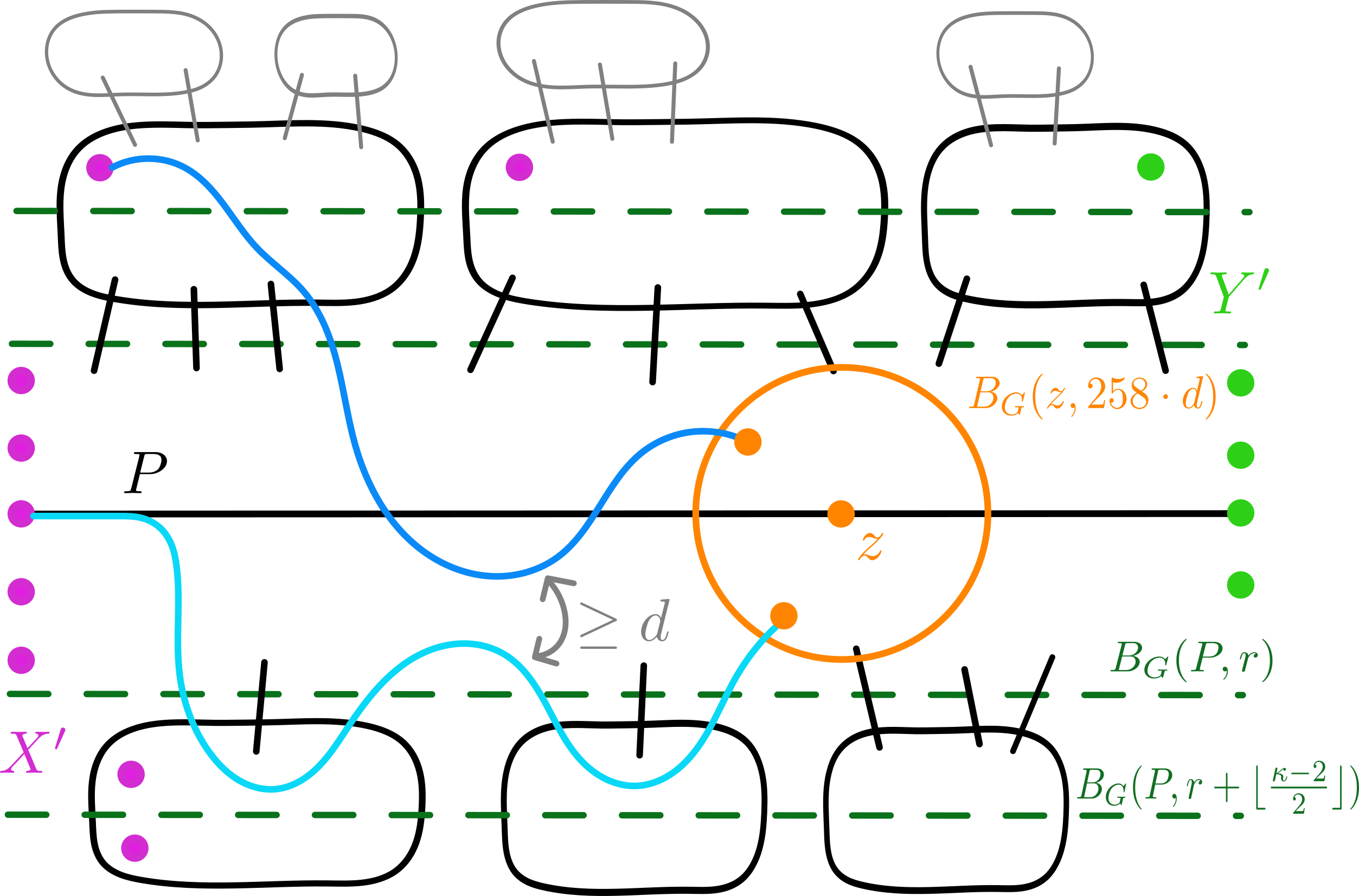}
		\end{subfigure}
		\vspace{0.5em}
		\caption{An illustration of \cref{lem:2PathsInATube}: There are either two $X'$--$Y'$ paths as in \ref{itm:2PathsInATube:1}, which are at distance at least~$d$ and which are (almost) contained in $B_G\left(P, r+\kappatwo\right)$, except for possibly some initial or final segment (left), or there is a vertex~$z \in V(P)$ such that $B_G(z,258d)$ intersects every $X'$--$Y'$ path and such that there are two $X'$--$B$ paths as in \ref{itm:2PathsInATube:2}, which are (almost) contained in $B_G\left(P, r+\kappatwo\right)$ (right).}
		\label{fig:2PathsInATube}
	\end{figure}

	\begin{proof}
		Let \defn{$G'$} be the graph obtained from $G$ by contracting every component $C$ of $G-B_G(P,r)$ to a vertex~$v_C$, and set $\defnm{A''} := \big(A \cap B_G(P,r)\big) \cup \big\{v_C : A' \cap V(C) \neq \emptyset\big\}$ for $A \in \{X,Y\}$.
		Applying \cref{cor:CoarseMengerFor2} in~$G'$ to $P =: p_0 \dots p_n$ and $X'',Y''$ yields either two $X''$--$Y''$ paths $Q'_1, Q'_2$ in $G'$ with $d_{G'}(Q'_1, Q'_2) \geq d$ such that $p_0,p_n$ are among the four endvertices of $Q'_1,Q'_2$, or a vertex $z \in V(P)$ such that $\defnm{B} := B_{G'}(z, 258\cdot d)$ intersects every $X''$--$Y''$ path in $G'$, and such that there are two $X''$--$B$ paths $W'_1, W'_2$ in $G'$ with $d_{G'}(W'_1, W'_2) \geq d$ such that $p_0$ is among the four endvertices of $W'_1,W'_2$. 
		\medskip
		
		Let us first assume the former. By \cref{lem:kappa/2NhoodIsConnected}, the $\kappatwo$-boundary $C\left[\partial_G C, \kappatwo\right]$ of every component~$C$ of~$G-U$ is connected. Therefore, for every $i \in [2]$ and vertex $v_C \in V(Q'_i)$ that is not an endvertex of $Q'_i$, we may replace $v_C$ in~$Q'_i$ by some $u$--$w$ path in $C\left[\partial_G C, \kappatwo\right]$ where $u, w \in \partial_G C$ are chosen so that they are incident with the edges of~$Q'_i$ that are incident with~$v_C$. 
		
		If an endvertex of $Q'_i$ is of the form~$v_C$, then let $w$ be a vertex in $X' \cap V(C)$ (or $Y' \cap V(C)$) and let $C_w \in \cC$ be the component of $G-\big(B_G(P,r) \cup O\big)$ containing~$w$.
		Since $C_w$ is contained in~$C$ and sends an edge to $B_G(P,r)$, \cref{lem:kappa/2NhoodIsConnected} implies that there is a path in $C\left[\partial_G C, \kappatwo\right]$ between $C_w$ and the edge~$e$ of~$Q'_i$ that is incident with~$v_C$. Since $C_w$ is a component of $G-\big(B_G(P,r) \cup O\big)$ and this path avoids both $B_G(P,r)$ and $O$, it follows that $e$ has an endvertex~$u$ in~$C_w$.
		Therefore, we may replace $v_C$ in~$Q'_i$ with an $w$--$u$ path in~$C_w$.
		This construction yields paths $Q_1, Q_2$ that are contained in~$G$. Moreover, they start and end in $X',Y'$, respectively, and they satisfy~\ref{itm:2PathsInATube:1a} as $d_G(Q_1, Q_2) \geq d_{G'}(Q'_1,Q'_2) \geq d$. Moreover, since $p_0,p_n$ were among the four endvertices of $Q'_1,Q'_2$, this is still true for $Q_1,Q_2$, and thus $Q_1,Q_2$ satisfy \ref{itm:2PathsInATube:1b}. Since by construction, $Q_1,Q_2$ satisfy \ref{itm:2PathsInATube:1c}, this implies that $Q_1,Q_2$ are paths as in~\ref{itm:2PathsInATube:1}.
		\medskip
		
		Let us now consider the former case. As $r \geq 258\cdot d$ and $z \in V(P)$, the ball $B = B_{G'}(z, 258\cdot d)$ does not contain any vertices of $G'$ of the form $v_C$, and hence $B = B_G(z,258\cdot d)$. As every $X'$--$Y'$ path in $G$ induces an $X''$--$Y''$ path in~$G'$, it follows that $B = B_G(z,258\cdot d)$ meets all $X'$--$Y'$ paths in~$G'$. By using the same construction as above, we can turn $W'_1, W'_2$ into $X'$--$B$ paths in~$G$ satisfying \ref{itm:2PathsInATube:1a} and \ref{itm:2PathsInATube:1c} such that one of them starts in~$p_0 \in X \cap B_G(P,r)$. Therefore, $z$ is as in~\ref{itm:2PathsInATube:2}.
	\end{proof}

	\subsection{Proof of \texorpdfstring{\cref{main:CoarseMenger:BoundedCycleSpace}}{Theorem 2}} 
	\label{subsec:ProofOfCoarseMenger:BoundedCycleSpace}
	
	We can now prove \cref{main:CoarseMenger:BoundedCycleSpace}, which we restate here for convenience.
	
	\begin{customthm}{\cref*{main:CoarseMenger:BoundedCycleSpace}} \label{thm:CoarseMenger:CycleSpace}
		There exists a function $c : \N^3 \rightarrow \N$ such that, for every $k, d \in \N$, for every graph~$G$ whose cycle space is generated by cycles of length at most $\kappa \in \N$, and for every $X, Y \subseteq V(G)$, at least one of the following statements holds:
		\begin{enumerate}[label=\rm{(\roman*)}]
			\item \label{itm:CoarseMenger:CycleSpace:Paths:Copy} There are $k$ disjoint $X$--$Y$ paths in $G$ which are pairwise at distance at least $d$ from each other.
			\item \label{itm:CoarseMenger:CycleSpace:Balls:Copy} There is a set $Z \subseteq V(G)$ of size at most $k-1$ such that $B_G(Z,c(\kappa, k,d))$ intersects every $X$--$Y$ path.
		\end{enumerate}
	\end{customthm}
	
	\begin{proof}
		Throughout the proof, we use parameters $r,d' \in \N$, which we define as $\defnm{r} := 1032\cdot d$ and $\defnm{d'} := 2r+d+\kappa+1$. 
		We prove the assertion with the function $c : \N^3 \to \N$ where $\defnm{c(\kappa,1,d)} := 0$ and $\defnm{c(\kappa, 2, d)} := 129\cdot d$ and $\defnm{c(\kappa, k+1,d)} := c(\kappa,k,d')$ for all $k \geq 2$. In particular, $c \in \mathcal{O}(2065^k \cdot(d+\kappa))$.
		\smallskip
		
		We proceed by induction on the number $k \in \N$ of desired paths. The base case $k = 1$ is trivial, and the case $k=2$ follows from \cref{thm:CoarseMengerFor2}, so let $k \geq 2$ be given, and assume that the assertion holds for $k$. 
		By the induction hypothesis, there is either a set $Z \subseteq V(G)$ of size at most~$k-1$ such that $B_G(Z, c(\kappa,k,d'))$ meets every $X$--$Y$ path, or there are $k$ disjoint $X$--$Y$ paths $P'_1, \dots, P'_k$ that are pairwise at distance at least~$d'$. In the former case, the set~$Z$ is as in~\cref{itm:CoarseMenger:CycleSpace:Balls:Copy} of \cref{thm:CoarseMenger:CycleSpace}, so we may assume the latter.
		\medskip
		
		We first need some definitions. 
		Let \defn{$\cC'$} be the set of components of $G-\big(\bigcup_{i \in [k]} B_G(P_i,r)\big)$.
		Further, let $\defnm{U_i}$ be the union over $B_G(P_i, r)$ together with all components $C \in \cC'$ that attach to $B_G(P_i,r)$.
		
		Assume that we are given a collection $\cB = \{B_1, \dots, B_k\}$ of subsets of~$V(G)$ such that each $B_i$ intersects every $(X \cap U_i)$--$(Y \cap U_i)$ path. For every $i \leq k$, let $\defnm{A_i^X(\cB)}$ and $\defnm{A_i^Y(\cB)}$ be the union over all components of $B_G(P_i, r) \setminus B_i$ that can be linked by a path that avoids $B_i \cup \bigcup_{j \neq i} B_G(P_j,r)$ to some vertex of~$X$ or~$Y$, respectively. Further, let $\defnm{A_i^r(\cB)} := B_G(P_i,r) \setminus (B'_i \cup A^X_i \cup A^Y_i)$. Note that $B_i, A_i^X(\cB), A_i^Y(\cB)$, and $A^r_i(\cB)$ are disjoint by the assumptions on $B_i$, and that $B_G(P_i,r) = B_i\, \dot\cup\, A_i^X(\cB)\, \dot\cup\, A_i^Y(\cB)\, \dot\cup\, A_i^r(\cB)$.    
		
		Let also $\defnm{\cC(\cB)}$ be the set of components of $G-\left(\bigcup_{i \in [k]} \big(B_G(P_i,r) \setminus A^r_i(\cB)\big)\right)$. 
		\medskip
		
		We now apply \cref{lem:2PathsInATube} to every path $P_i$ with $2d,r,X,Y$ and $O_i = \bigcup_{j \neq i} B_G(P_j,r)$. If, for some $i \in [k]$, this application yields two $(X \cap U_i)$--$(Y \cap U_i)$ paths $Q_1,Q_2$ as in~\ref{itm:2PathsInATube:1} of \cref{lem:2PathsInATube}, then $P_1, \dots, P_{i-1}, Q_1,Q_2, P_{i+1}, \dots, P_k$ are $k+1$ disjoint $X$--$Y$ paths as in~\ref{itm:CoarseMenger:CycleSpace:Paths:Copy} of \cref{thm:CoarseMenger:CycleSpace} (where $d_G(Q_1,Q_2) \geq d$ follows from~\ref{itm:2PathsInATube:1a} and $d_G(Q_n,P_j) \geq d$ follows from~\ref{itm:2PathsInATube:1c}). 
		Since we are done in this case, we may assume that \cref{lem:2PathsInATube} yields for every $i \in [k]$ outcome \ref{itm:2PathsInATube:2}: a vertex $z'_i \in V(P_i)$ such that $B_G(z'_i,258\cdot d)$ intersects every $(X \cap U_i)$--$(Y \cap U_i)$ path, and two $(X \cap U_i)$--$B_G(z'_i, 258\cdot d)$ paths $Q^i_1,Q^i_2$ satisfying \ref{itm:2PathsInATube:1a} and~\ref{itm:2PathsInATube:1c}. 
		
		Let $\defnm{Z} = \{\defnm{z_1}, \dots, \defnm{z_k}\}$ be a set of vertices $z_i \in V(P_i)$, for $i \in [k]$, such that (see \cref{fig:Bird}):
		\begin{enumerate}[label=\rm{(\arabic*)}]
			\item \label{itm:Def:vi:Ball} $\defnm{B_i} := B_G(z_i,516\cdot d)$ intersects every $(X \cap U_i)$--$(Y \cap U_i)$ path, 
			\item \label{itm:Def:vi:Paths} there are two $X$--$B_i$ paths $\defnm{Q^i_1}, \defnm{Q^i_2}$ such that $d_G(Q^i_1, Q^i_2) \geq 2d$ and such that
			\begin{enumerate}[label=\rm{(2\alph*)}]
				\item \label{itm:Def:vi:Q1} $Q^i_1 \subseteq G\left[P_i, r+\kappatwo\right]$, and
				\item \label{itm:Def:vi:Q2} there is $j_i \in [k]$ such that $V(Q^i_2) \subseteq \left(B_G\left(P_i \cup P_{j_i}, r+\kappatwo\right) \setminus B'_{j_i}\right) \cup \bigcup_{C \in \cC(\cB')} V(C)$ and $Q^i_2$ starts in $X \cap U_{j_i}$ where $\defnm{\cB'} := \{B'_1, \dots, B'_k\}$ and $\defnm{B'_n} := B_G(z_n, 1032\cdot d)$ for $n \in [k]$.
			\end{enumerate}
			\item \label{itm:Def:vi:HZ} $H(Z)$ (see below) is a forest without parallel edges but possibly with loops, and
			\item \label{itm:Def:vi:Minimise} among all sets of vertices that satisfy \ref{itm:Def:vi:Ball} to \ref{itm:Def:vi:HZ}, the set $Z$ minimises $\sum_{i \in [k]} ||b_iP_i||$,
		\end{enumerate}
		where \defn{$b_i$} is the last vertex on $P_i$ that is contained in~$B_i$ (when moving along $P$ from $X$ to $Y$).
		Moreover, $H(Z)$ is the auxiliary (multi)-graph $\defnm{H(Z)} := ([k],E_Z)$ with $E_Z := \{e_i = ij_i : i \in [k]\}$.
		
		Note that \ref{itm:Def:vi:Q2} is more general than \ref{itm:2PathsInATube:1c} of \cref{lem:2PathsInATube} as it allows $Q^i_2$ to intersect $B_G\left(P_{j_i}, r+\kappatwo\right)$ for one additional $j_i \in [k]\setminus \{i\}$.
		Note further that the vertices $z'_i$ from above satisfy \ref{itm:Def:vi:Ball} to \ref{itm:Def:vi:HZ} by \cref{itm:2PathsInATube:2} of \cref{lem:2PathsInATube} (where $H(Z)$ is the edgeless graph), and hence there exists a set~$Z$ satisfying \ref{itm:Def:vi:Ball} to \ref{itm:Def:vi:Minimise}.
		
		Abbreviate $\defnm{A_i^X} := A_i^X(\cB')$, $\defnm{A_i^Y} := A_i^Y(\cB')$, $\defnm{A_i^r} := A_i^r(\cB')$, and $\defnm{\cC} := \cC(\cB')$.
		\medskip
		
		\begin{figure}[ht]
			\centering
			\includegraphics[width=0.67\linewidth]{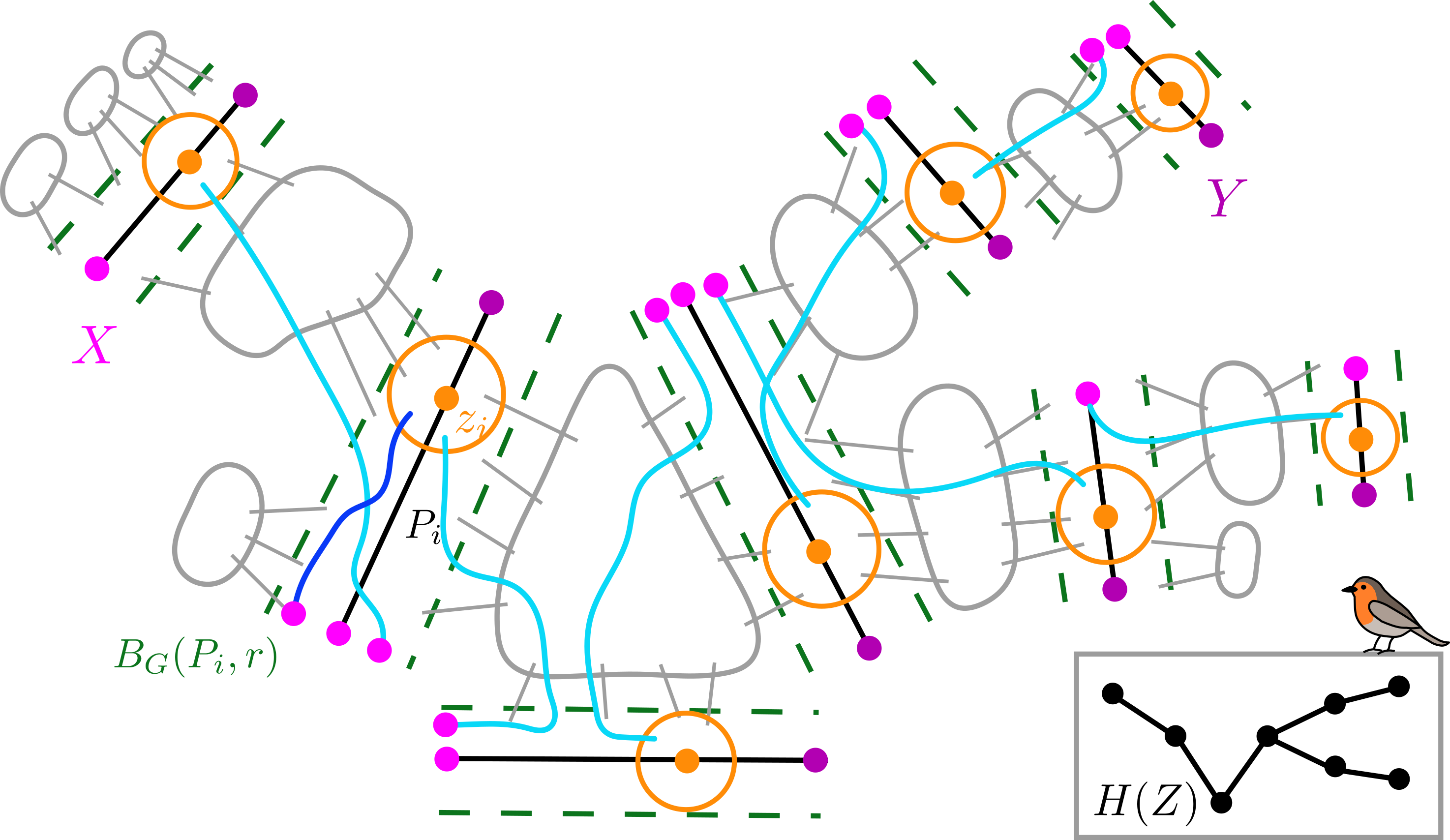}
			\vspace{0.5em}
			\caption{An illustration of the situation in the proof of \cref{thm:CoarseMenger:CycleSpace}. The paths $P_i$ are indicated in black, and the components of $G-\big(\bigcup_{i \in [k]} \big(B_G(P_i,r) \setminus A_i^r\big)\big)$ are indicated in grey. The balls $B_i$ around the vertices~$z_i$ are depicted in orange, and the paths $Q^i_2$ are indicated in light blue. Additionally, one path $Q^i_1$ is shown (in dark blue). The corresponding graph $H(Z)$ is a tree (see the lower right corner).}
			\label{fig:Bird}
		\end{figure}
		
		In the remainder of the proof we show that the balls $B'_i = B_G(z_i, 1032 \cdot d)$ around the vertices in~$Z$ hit all $X$--$Y$ paths, and hence $Z$ is as in~\ref{itm:CoarseMenger:CycleSpace:Balls:Copy} of \cref{thm:CoarseMenger:CycleSpace} (as $1032d \leq c(\kappa,k+1,d)$). 
		
		So suppose for a contradiction that the balls $B'_i$ do not hit all $X$--$Y$ paths. 
		
		\begin{claim} \label{claim:main:Path}
			There are $i,j \in [k]$ such that some component in $\cC$ attaches to $A^X_i$ and~$A^Y_j$.
		\end{claim}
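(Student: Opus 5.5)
We argue from the standing assumption that some $X$--$Y$ path $W$ avoids $\bigcup_{n\in[k]} B'_n$, and obtain the claim by following $W$ and recording which of the sets $A^X_n$, $A^Y_n$, $A^r_n$ and which components of $\cC$ it passes through. Since $W$ avoids every $B'_n$, each of its vertices lies either in some $A^X_n\cup A^Y_n\cup A^r_n$ (as $B_G(P_n,r)=B'_n\,\dot\cup\,A^X_n\,\dot\cup\,A^Y_n\,\dot\cup\,A^r_n$) or in some component of $\cC$. The vertices in $A^r_n$ also lie in components of $\cC$, because $\cC$ is the set of components of the graph obtained by deleting only $B_G(P_n,r)\setminus A^r_n$. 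So $W$ is a concatenation of maximal subpaths inside single components of $\cC$, separated by vertices in $\bigcup_n (A^X_n\cup A^Y_n)$.

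The key steps are as follows. First, we treat the start of $W$: its first vertex is in $X$. If that vertex lies in some $A^X_i$, we have an $A^X$-vertex immediately. Otherwise it lies in some $C\in\cC$. If this $C$ attaches to some $A^Y_j$, or contains a vertex of $Y$ reachable inside $C$, the claim essentially follows. Second, we label each transition vertex of $W$ by $X$ or $Y$ according to whether it lies in some $A^X_n$ or some $A^Y_n$. We then note that a vertex of $A^Y_n$ cannot be followed directly on $W$ by a vertex of $A^X_m$ within $B_G(P_n,r)$, and similarly with the roles of $X$ and $Y$ exchanged, by the defining property of these sets. Indeed, the components of $B_G(P_n,r)\setminus B'_n$ are each in exactly one class, and an edge between $A^X_n$ and $A^Y_m$ for $m\neq n$ is impossible, since $d_G(P_n,P_m)\geq d' > 2r+1$. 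Hence consecutive transition vertices of different types are linked by a subpath of $W$ lying in a single component of $\cC$, which then attaches to both sets.

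Third, we use the endpoint of $W$ in $Y$: the last such "block" must reach a $Y$-labelled set, or else reach $Y$ inside a component of $\cC$. The first block starts in $X$. Consider the last $X$-type transition vertex (or the start of $W$). The next piece of $W$ lives in one component $C\in\cC$ and leads to either an $A^Y_j$ vertex or to $Y$ itself. To close the case where $C$ meets $Y$ directly but touches no $A^Y_j$, we argue that $C$ attaches to some $B_G(P_j,r)$. If it attached to none, then every vertex of $C$ would be separated from all the $P_n$, and the path from $X$ through $C$ to $Y$ yields a contradiction with \ref{itm:Def:vi:Ball} via $(X\cap U_j)$--$(Y\cap U_j)$ paths. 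The vertices of $B_G(P_j,r)\setminus B'_j$ adjacent to $C$ reachable from $Y$ are in $A^Y_j$ by definition, since $C$ avoids $B'_j$ and the other tubes. The symmetric argument handles the case where the start of $W$ lies in a component.

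The main obstacle is this boundary case, where $W$ starts or ends inside a component of $\cC$ rather than inside some tube. It requires showing that such a component attaches to an $A^X_i$ or an $A^Y_j$, respectively. We expect this to follow from the definitions of $A^X_i$ and $A^Y_j$ as "reachable avoiding $B_i\cup\bigcup_{m\neq i}B_G(P_m,r)$", combined with the fact that $B_i\subseteq B'_i$. We also need the degenerate situation in which $W$ avoids all tubes, so that it lies in a single component of $\cC$ containing both $X$ and $Y$ vertices. This is handled by \ref{itm:Def:vi:Ball}, because such a component would lie in some $U_i$ and would give an $(X\cap U_i)$--$(Y\cap U_i)$ path missing $B_i$.
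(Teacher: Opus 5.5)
There is a genuine gap in your degenerate case, where $W$ meets no tube $B_G(P_n,r)$. The same flaw reappears in your boundary case (``if it attached to none \dots{} contradiction with~\ref{itm:Def:vi:Ball}''). You assert that the component containing $W$ lies in some $U_i$, so that $W$ is an $(X\cap U_i)$--$(Y\cap U_i)$ path missing $B_i$. But graphs here need not be connected. The component of $G-\bigcup_n B_G(P_n,r)$ containing $W$ may have no neighbour in any tube, for instance when $W$ lies in a component of $G$ containing none of the $P_n$. Then $W$ lies in no $U_i$, and \ref{itm:Def:vi:Ball} gives nothing.

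In that situation the claim itself can fail. A component of $\cC$ attaching to $A^X_i$ and $A^Y_j$ yields an $X$--$Y$ path that avoids $\bigcup_n B'_n$ and meets a tube, and there may be no such path. The correct exit, as in the paper, is that $P_1,\dots,P_k,W$ are then $k+1$ disjoint $X$--$Y$ paths pairwise at distance at least $d$, since $W$ avoids every $B_G(P_n,r)$ and $r\geq d$. So outcome~\ref{itm:CoarseMenger:CycleSpace:Paths:Copy} holds and the proof of the theorem ends there, rather than producing a contradiction.

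Your handling of the ends of $W$ is also unjustified as written. The statement ``$C$ avoids $B'_j$ and the other tubes'' is false in general, since components of $\cC$ may contain vertices of $A^r_m$, as you noted yourself. Likewise, the claim in your first step that the conclusion ``essentially follows'' when the starting component attaches to some $A^Y_j$ is not justified. What you need instead is the observation, also used by the paper, that if $W$ meets some tube, then its first vertex in $\bigcup_n B_G(P_n,r)$ lies in some $A^X_i$. This holds because the segment of $W$ before that vertex avoids all tubes, and $W$ avoids $B'_i$. Symmetrically, the last such vertex lies in some $A^Y_j$. So the labelled vertices along $W$ begin with an $X$-label and end with a $Y$-label, and some $X$-label is immediately followed by a $Y$-label. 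Your step~2 then finishes the proof: there are no edges between $A^X_n$ and $A^Y_m$, so the interior of the segment between these two vertices is nonempty and lies in a single $C\in\cC$.

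With these repairs your argument is correct and genuinely simpler than the paper's. The paper chooses $W$ meeting as few tubes as possible and re-entering them as rarely as possible, and takes the first and last tubes $i,j$. It then finds one component leaving $A^X_i$ towards $B_G(P_j,r)\setminus A^r_j$ and another leaving $A^Y_j$ towards $B_G(P_i,r)\setminus A^r_i$, and identifies them via \cref{lem:Hypertree}. Your labelling argument needs neither the extremal choice of $W$ nor the hypertree lemma.
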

		
		\begin{claimproof}
			By assumption, there is an $X$--$Y$ path $W$ that does not meet $\bigcup_{n \in [k]} B'_n$. We choose $W$ so that it intersects as few $B_G(P_n, r)$ as possible, and among those so that the number of its subpaths that start in some $B_G(P_i, r)$, end in some other $B_G(P_j,r)$, and are otherwise disjoint from $\bigcup_{n\in [k]} B_G(P_n,r)$ is as small as possible (that means $W$ (re-)enters each $B_G(P_n, r)$ after meeting some other $B_G(P_\ell,r)$ as few times as possible). If $W$ in fact does not intersect any $B_G(P_n, r)$, then we are done as $P_1, \dots, P_k, W$ would be a collection of paths as in~\ref{itm:CoarseMenger:CycleSpace:Paths:Copy} of \cref{thm:CoarseMenger:CycleSpace}. 
			If $W$ intersects precisely one $B_G(P_n, r)$, then $W$ starts in $X \cap U_n$ and ends in $Y \cap U_n$, which contradicts property~\ref{itm:Def:vi:Ball} of~$z_n$.
			
			Hence, $W$ meets at least two distinct sets $B_G(P_n, r)$. Let $i,j \in [k]$ be such that $B_G(P_i, r)$ is the first such set that $W$ meets when moving along $W$ from $X$ to $Y$, and $B_G(P_j,r)$ the last. 
			Since $W$ starts in~$X$ and ends in~$Y$, it meets $A^X_i$ and $A^Y_j$. 
			By the choice of $W$, it follows easily that $W$ avoids $B_G(P_n,r) \setminus A_n^r$ for all $n \neq i,j$ (because if $W$ meets some other $B_G(P_n,r) \setminus A_n^r$, then we could reroute $W$ so that it does not meet $B_G(P_i,r)$ before meeting $A^X_n$, or so that it does not meet $B_G(P_j,r)$ after meeting $A^Y_n$).
			
			It follows that $W$ contains some subpath between $B_G(P_i, r)$ and $B_G(P_j, r)$ that is internally disjoint from $B_G(P_i \cup P_j, r)\setminus (A_i^r \cup A_j^r)$ and which starts in $A^X_i$ and ends in $B_G(P_j,r)$, and this subpath must be contained in some component $C \in \cC$. 
			(Recall that $\cC$ is the set of components of $G-\big(\bigcup_{n \in [k]} \big(B_G(P_n, r) \setminus A_n^r\big)\big)$). In particular, $C$ sends an edge to $A^X_i$ and to $B_G(P_j,r) \setminus A_j^r$.
			Analogously, there is a component $C' \in \cC$ that sends an edge to $A^Y_j$ and to $B_G(P_i,r) \setminus A_i^r$. By applying \cref{lem:Hypertree} to the sets $B_G(P_n,r) \setminus A_n^r$, $n \in [k]$, (which are connected), it follows that $C = C'$, and thus $C$ is as desired.
		\end{claimproof}
		
		By \cref{claim:main:Path}, there are $i,j \in [k]$, without loss of generality say $i=1$ and $j=2$, and a component~\defn{$C_{1,2}$} of $G-\big(\bigcup_{n\in[k]} \big(B_G(P_n,r)\setminus A_n^r\big)\big)$ that attaches to $A^X_1$ and to $A^Y_2$. Pick some vertex~$\defnm{c}$ inside the component~\defn{$C'_{1,2}$} of $C_{1,2}-B_G(P_2,r) = C_{1,2}-A_2^r$ that attaches to $A_1^X$ (note that $C'_{1,2}$ is unique by \cref{lem:Hypertree}), and let $\defnm{X'} := X \cup \{c\}$. 
		
		Applying \cref{lem:2PathsInATube} in $G$ with $2d$, $X = X'$, $Y$, $P = P_2$ and $O = \bigcup_{n \in [k]\setminus\{2\}} \big(B_G(P_n, r) \setminus A_n^r\big)$ either yields two $X'$--$Y$ paths that satisfy \ref{itm:2PathsInATube:1a} to \ref{itm:2PathsInATube:1c} of \cref{lem:2PathsInATube}, or a vertex $z \in V(P_2)$ that satisfies \ref{itm:2PathsInATube:2} of \cref{lem:2PathsInATube}.
		Thus, we may conclude the proof by considering the following two cases:
		\medskip

		\noindent \textbf{Case 1:} \emph{There is a vertex $\defnm{z} \in V(P_2)$ such that $\defnm{B''_2} := B_G(z, 516\cdot d)$ intersects every $(X'\cap U_2)$--$(Y \cap U_2)$ path and such that there are two $(X'\cap U_2)$--$B''_2$ paths $W_1,W_2$ satisfying \ref{itm:2PathsInATube:1a} (with $2d$) and \ref{itm:2PathsInATube:1c} of \cref{lem:2PathsInATube} such that one of $W_1,W_2$ starts in $X' \cap B_G(P_2,r)$.}
		
		We show that $B''_2$ contains a vertex of~$P_2$ that appears on~$P_2$ after~$b_2$ (when moving along~$P_2$ from~$X$ to~$Y$), and that $\{z_1, z, z_3, \dots, z_k\}$ satisfies \ref{itm:Def:vi:Ball} to \ref{itm:Def:vi:HZ}. This then contradicts the choice of~$Z$ as $Z'$ would have been a better choice for~$Z$ because of \ref{itm:Def:vi:Minimise}, and thus concludes the proof in Case~1 (see \cref{fig:ProofSketch:FinalStep} (left)).
		\smallskip
		
		We first remark that 
		\labtequtag{eq:B2}
		{\emph{$B''_2$ avoids $B_2$, and there exists a $B''_2$--$(Y \cap U_2)$ path~$W$ in $G$ that avoids $B_2$.}}
		{\ding{100}}
		Indeed, if $B''_2$ meets $B_2$, then $B''_2 \subseteq B'_2$, which implies that $B'_2$ separates $c$ from~$Y \cap U_2$. But this contradicts that $C_{1,2} \ni c$ attaches to $A^Y_2$. Moreover, since $C_{1,2}$ attaches to $A^Y_2$, there is a $c$--$(Y \cap U_2)$ path $W'$ that avoids $B_2$. As $B''_2$ separates $c \in X' \cap U_2$ from $Y \cap U_2$, it follows that $B''_2$ meets $W'$. Hence, there is a subpath~$W$ of $W'$ with endvertices in $B''_2$ and $Y \cap U_2$. Since $W'$ avoids $B_2$, also $W$ avoids $B_2$.
		\medskip

		We now show that $B''_2$ contains a vertex of~$P_2$ that appears on~$P_2$ after~$b_2$. 
		By \eqref{eq:B2}, $B''_2$ avoids $B_2$. Let $b'_2$ be the first vertex on $P_2$ that is contained in $B_2$. Then $P_2b'_2 \cup G[B_2] \cup b_2P_2$ is connected and links $X \cap U_2$ to $Y \cap U_2$. As $B''_2$ avoids $B_2$ but separates $X \cap U_2$ from $Y \cap U_2$, it follows that $B''_2$ meets either $P_2b'_2$ or $b_2P_2$. Since we are done in the latter case, we may assume the former. 
		
		Let $b''_2 \in V(P_2b'_2) \cap B''_2$. 
		By \eqref{eq:B2}, there is a $B''_2$--$(Y \cap U_2)$ path $W$ that avoids $B_2$. Then $P_2b''_2 \cup G[B''_2] \cup W$ is connected, links $X \cap U_2$ to $Y \cap U_2$ and avoids $B_2$. This contradicts property~\ref{itm:Def:vi:Ball} of~$B_2$.
		\medskip
		
		Finally, we show that $\defnm{Z'} = \{z_1, z, z_3, \dots, z_k\}$ satisfies \ref{itm:Def:vi:Ball} to \ref{itm:Def:vi:HZ}, which concludes the proof in Case~1.
		
		\ref{itm:Def:vi:Ball}: By the assumption of Case~1, $B''_2$ satisfies~\ref{itm:Def:vi:Ball}. Hence, by~\ref{itm:Def:vi:Ball} of~$Z$, the set~$Z'$ satisfies \ref{itm:Def:vi:Ball}.
		
		\ref{itm:Def:vi:Paths}: By the assumption of Case~1, at least one of $W_1, W_2$ starts in $X' \cap B_G(P_2,r)$; without loss of generality assume that this path is $W_1$. In particular, $W_1$ starts in $X$. Let $\defnm{\tilde{Q}^2_1} := W_1$. If $W_2$ also starts in~$X$, then we set $\defnm{\tilde{Q}^2_2} := W_2$. In this case, $\tilde{Q}^2_1, \tilde{Q}^2_2$ satisfy \ref{itm:Def:vi:Paths} with $j_2 : =2$.
		
		Otherwise, if $W_2$ starts in~$c \notin X$, then by the choice of~$c \in V(C'_{1,2})$, there is a $c$--$(X \cap U_1)$ path $W'$ in $V(C'_{1,2}) \cup \big(B_G(P_1,r) \setminus B'_1\big) \cup \big(\bigcup_{C \in \cC'} V(C)\big)$. Let \defn{$\tilde{Q}^2_2$} be any $(X \cap U_1)$--$B''_2$ path in $W' \cup W_2$. 
		Then by definition, $\tilde{Q}^2_1$ and~$\tilde{Q}^2_2$ satisfy~\ref{itm:Def:vi:Paths} with $j_2 := 1$, where we note that $d_G(\tilde{Q}^i_1,\tilde{Q}^i_2) \geq 2d$ because $d_G(W_1,W_2) \geq 2d$ and $C'_{1,2}$ is distance at least~$2d$ from $W_1$ (as $W_1,W_2$ satisfy~\ref{itm:2PathsInATube:1a}).

		Moreover, for every $i \neq 2$, the paths $Q^i_1,Q^i_2$ still satisfy \ref{itm:Def:vi:Paths}. Indeed, it suffices to verify \ref{itm:Def:vi:Q2}, for which we only need to check that the paths~$Q^i_2$ with $j_i = 2$ avoid $B''_2$. So suppose for a contradiction that some such path $Q^i_2$ meets $B''_2$. By \eqref{eq:B2} and (the `old')~\ref{itm:Def:vi:Paths} of~$Q^i_2$, the subgraph $Q^i_2 \cup G[B''_2] \cup W$ links $X \cap U_2$ to $Y \cap U_2$ and avoids $B_2$. But this contradicts~\ref{itm:Def:vi:Ball} of~$B_2$.
		
		\ref{itm:Def:vi:HZ}: If $j_2 = 2$, then we only added a loop, so $H(Z')$ still satisfies \ref{itm:Def:vi:HZ}.
		Therefore, we may assume $j_2 = 1$.
		
		Suppose for a contradiction that $H(Z')$ contains a cycle $D$ (where possibly $D$ is a $2$-cycle consisting of two parallel edges). If $21 \notin E(D)$, then $D \subseteq H(Z)$ (as $E_{Z'} \setminus E_Z = \{21\}$) and thus $H(Z)$ did not satisfy~\ref{itm:Def:vi:HZ}, a contradiction. Therefore, $21 \in E(D)$.
		If also $12 \in E(D)$, then by \ref{itm:Def:vi:Q2} there exists an $(X \cap U_2)$--$C_{1,2}$ path that avoids $B_2$. But since $C_{1,2}$ attaches to $A^Y_2$, there is also a $C_{1,2}$--$(Y \cap U_2)$ path that avoids~$B_2$. Since $C_{1,2}$ is connected and avoids $B_2$, this contradicts that $B_2$ satisfies~\ref{itm:Def:vi:Ball}.
		
		Hence, we may assume, without loss of generality, that $32 \in E(D)$. By \ref{itm:Def:vi:Q2}, there is a component~$C_{2,3}$ of $G-\big(\bigcup_{i \in [k]} \big(B_G(P_i, r) \setminus A_i^r\big)\big)$ that attaches to $B_G(P_2,r) \setminus A_2^r$ and $B_G(P_3,r)\setminus A_3^r$. (Note that $C_{2,3}$ is unique by \cref{lem:Hypertree}).
		Since $32 \in E_Z$, there exists by \ref{itm:Def:vi:Q2} an $(X \cap U_2)$--$C_{2,3}$ path that avoids $B_2$. As $C_{2,3} = C_{1,2}$ by \cref{lem:Hypertree} (applied to the (connected) sets $B_G(P_n,r) \setminus A_n^r$) and because $D$ is a cycle, this yields the same contradiction as in the previous paragraph. 
		\medskip
		
		\noindent \textbf{Case 2:} \emph{There are two $(X' \cap U_2)$--$(Y\cap U_2)$ paths $W_1,W_2$ that satisfy~\ref{itm:2PathsInATube:1a} to~\ref{itm:2PathsInATube:1c} of \cref{lem:2PathsInATube} (with~$2d$).}
		
		We show that $G$ contains $k+1$ disjoint $X$--$Y$ paths that are pairwise at distance at least~$d$, and hence $G$ satisfies~\ref{itm:CoarseMenger:CycleSpace:Paths:Copy} (see~\cref{fig:ProofSketch:FinalStep} (right)).
		\smallskip
		
		For this, let us first note that the auxiliary graph \defn{$H$} with vertex set~$[k]$ and edge set $\{e_i = ij_i : i \in [k]\}$ where $j_2 = 1$ and every other~$j_i$ is as in~\ref{itm:Def:vi:Paths} is a forest (without parallel edges but possibly with loops). Indeed, $H = H(Z')$ where $Z'$ is as in Case~1, and thus the proof that $H$ is a forest is analogous to the proof of~\ref{itm:Def:vi:HZ} in Case~1 (where we remark that we never made use of $B''_2$ or any other assumption of Case~1). 
		\smallskip
		
		We now start with the construction of the $k+1$ $X$--$Y$ paths. 
		For this, let us first note that, since $H$ is a forest (without parallel edges but with loops), the sequence $\cS = 2,1,j_1, j_{j_1}, j_{j_{j_1}}, \dots$ is a path, which eventually ends in a leaf of~$H$. (More precisely, it eventually loops at a leaf).
		Note that by \ref{itm:Def:vi:Q2} there exists for every consecutive $m,n \in \cS$ a (unique) component \defn{$C'_{m}$} of $G-\big(B_G(P_m,r) \cup \bigcup_{i \neq m} \big(B_G(P_i,r) \setminus A^r_i\big)\big)$ that meets $Q^m_2$ and attaches to $B_G(P_n,r)$.
		Let $\defnm{\ell_1}, \defnm{\ell_2}, \dots, \defnm{\ell_n}$ be a subsequence of~$\cS$ where $\ell_1 = 2$, and $\ell_n$ is a leaf of~$H$, and $\ell_{i+1}$ is the last element of $\cS$ such that $C_{\ell_{i}}$ meets $Q^{\ell_{i+1}}_1 \cup Q^{\ell_{i+1}}_2$ if such an $\ell_{i+1}$ exist, otherwise we let $\ell_{i+1}$ be the successor of $\ell_i$ in $\cS$. 
		\medskip
		
		\begin{figure}[ht]
			\centering
			\begin{subfigure}[b]{0.48\textwidth}
				\centering
				\includegraphics[width=0.85\linewidth]{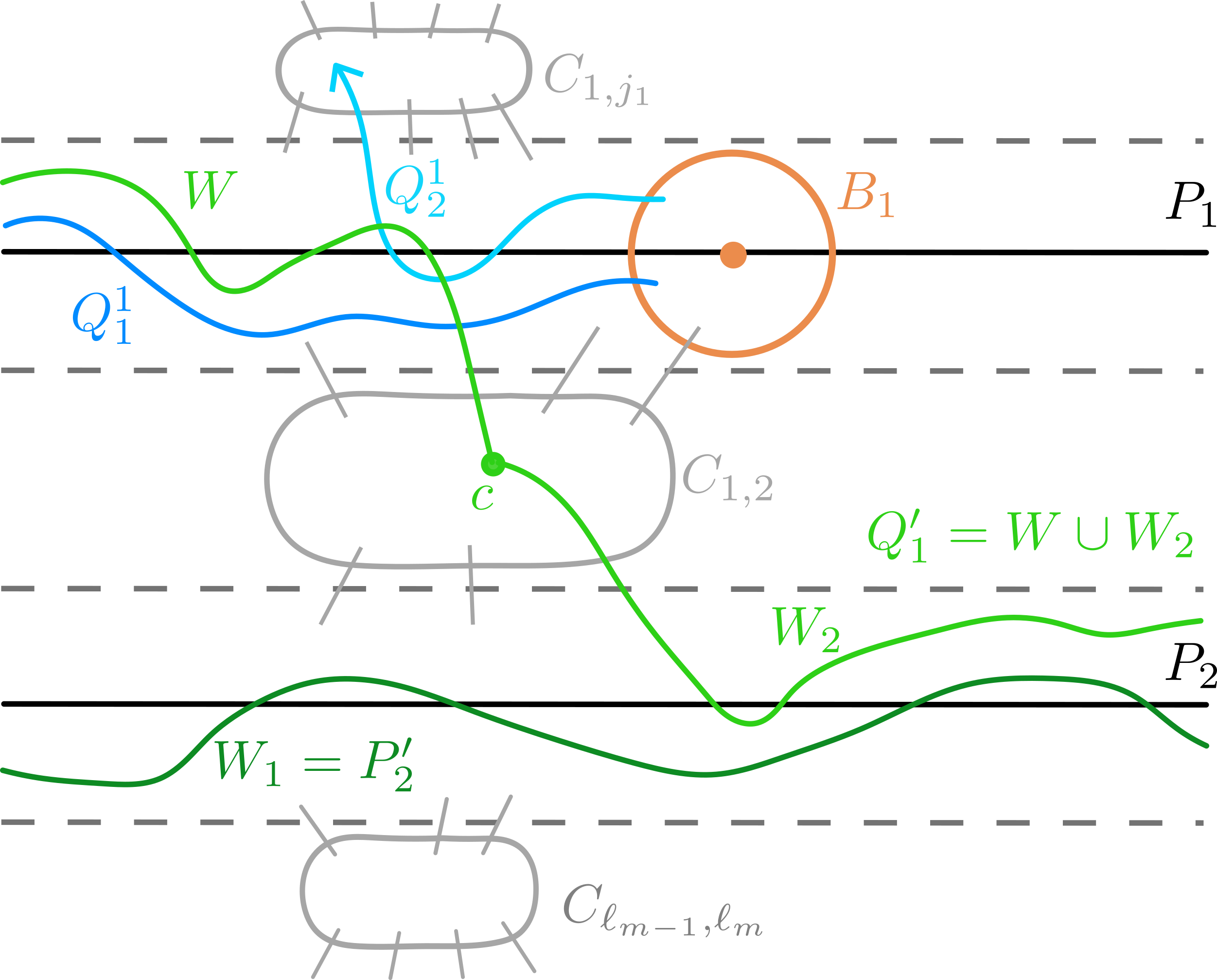}
			\end{subfigure}
			\begin{subfigure}[b]{0.48\textwidth}
				\centering
				\includegraphics[width=0.85\linewidth]{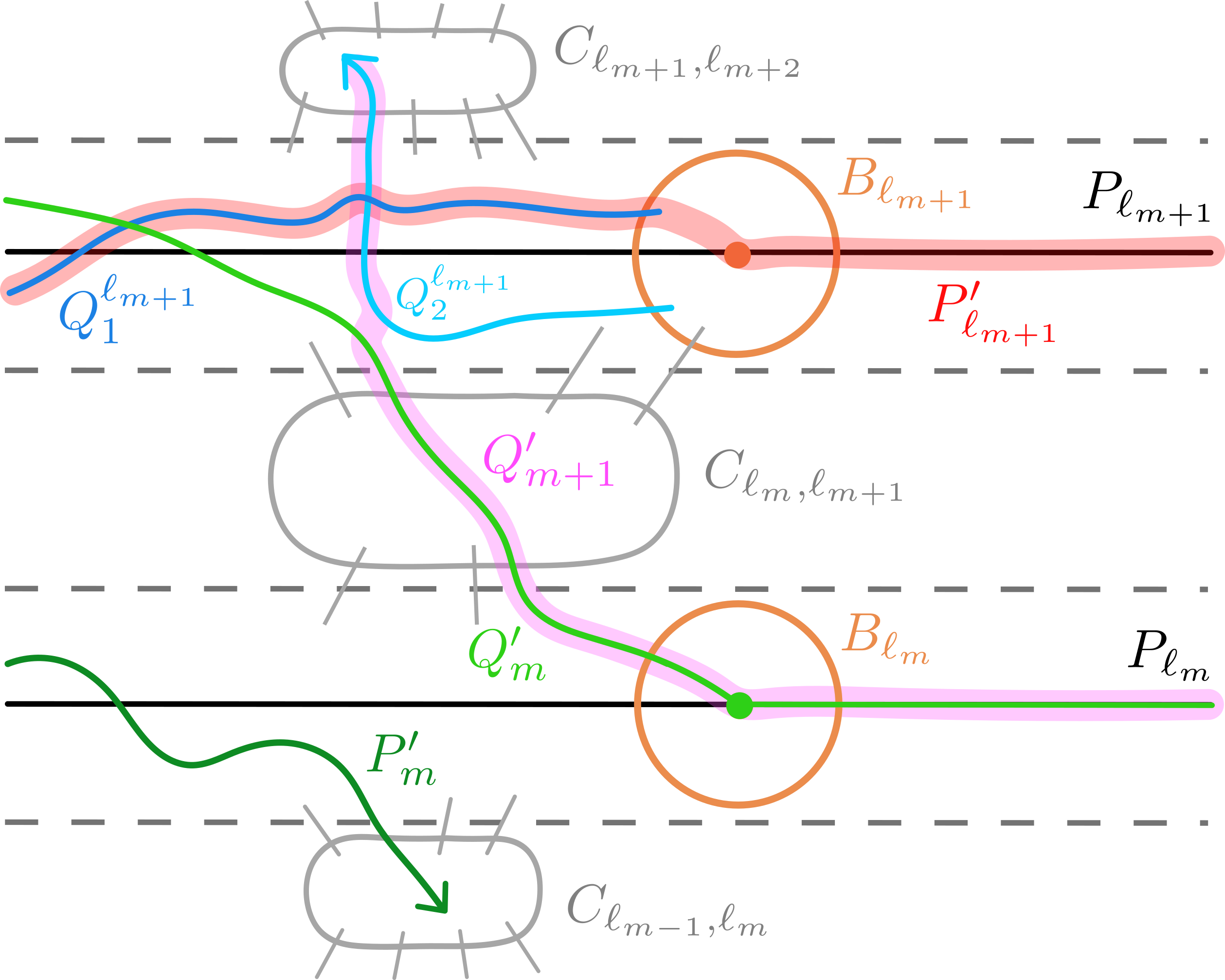}
			\end{subfigure}
			\vspace{0.5em}
			\caption{Illustration of the paths $P'_2, Q'_1$ (left) and $P'_{\ell_{m+1}}, Q'_{m+1}$ (right). On the right side, $i=2$.}
			\label{fig:Rerouting}
		\end{figure}
		
		\textbf{Construction of $\mathbf{P'_2 = P'_{\ell_1}}$ and $\mathbf{Q'_1}$:} (See \cref{fig:Rerouting} (left).) Without loss of generality assume that $W_1$ does not start in~$c$ (and hence it starts in $X$). 
		Set $\defnm{P'_2} := W_1$. If $W_2$ starts in $X$, too, then $P'_2,W_2$ are $X$--$Y$ paths, and they avoid $\bigcup_{i \neq 2} B_G(P_i,r)$ by~\ref{itm:2PathsInATube:1c}. Hence, $P_1,W_1,W_2,P_3, \dots, P_k$ is a collection of $k+1$ disjoint $X$--$Y$ paths as in~\ref{itm:CoarseMenger:CycleSpace:Paths:Copy} (where $d_G(W_1,W_2) \geq d$ holds by~\ref{itm:2PathsInATube:1a}, and $d_G(W_j,P_i) \geq r \geq d$ holds because $W_1,W_2$ avoid $\bigcup_{i \neq 2} B_G(P_i,r)$). 
		In this case, we terminate the construction and conclude the proof.
		
		Otherwise, $W_2$ starts in~$c$. Then there exists a $(X \cap U_{\ell_2})$--$c$ path~$W$ in $V(C'_{\ell_1}) \cup \big(B_G(P_{\ell_2},r)\setminus B'_{\ell_2}\big) \cup \big(\bigcup_{C \in \cC'} V(C)\big)$ (either because $C'_{\ell_1}$ intersects $Q^{\ell_1}_1 \cup Q^{\ell_2}_2$ or because $\ell_1 = 1$ and then $C'_{\ell_1} = C'_{1,2}$ and so $C'_{\ell_1}$ attaches to $A^X_1$). Let $\defnm{Q'_{1}}$ be any $(X \cap U_{\ell_2})$--$B''_2$ path in $W \cup W_2$. Note that $Q'_1$ avoids $B'_1$. 
		
		Since $P'_2$ avoids $\bigcup_{i \neq 2} B_G(P_i,r)$, the paths $P_1, P'_2,P_3,\dots, P_k$ form a collection of $k$ disjoint $X$--$Y$ paths that are pairwise at distance at least~$d$. 
		Moreover, $d_G(P'_2,Q'_{\ell_1}) \geq 2d$ because $d_G(W_1,W_2 \cup C'_{1,2}) \geq 2d$ by property \ref{itm:2PathsInATube:1a} of $W_1,W_2$. In particular, $d_G(W_1, C'_{\ell_1}) \geq 2d$ implies $d_G(P'_2, Q^{\ell_2}_1 \cup Q^{\ell_2}_2) \geq 2d$.
		\medskip
		
		Now assume that for some $m < n$ we have defined a collection of $k$ disjoint $X$--$Y$ paths $\{P'_{\ell_i} : i \leq m\} \cup \{P_j : j \in [k]\setminus \{\ell_1, \dots, \ell_m\}\}$ that are pairwise at distance at least~$d$, and a path~$Q'_m$ that starts in $X \cap U_{\ell_{m+1}}$, ends in~$Y$ and avoids $B'_{\ell_{m+1}}$ such that each $P'_i$ is at distance at least $2d$ from $B_G(P_j,r)$ and $Q^j_1,Q^j_2$ for every $j \in [k]\setminus \{\ell_1, \ldots, \ell_m\}$, and the path $Q'_m$ is at distance at least~$2d$ from all $P'_i$, and avoids all $B_G(P_j,r)\setminus A_j^r$ with $j \in [k] \setminus \{\ell_1, \ldots, \ell_{m+1}\}$.
		\medskip

		\textbf{Construction of $\mathbf{P'_{\ell_{m+1}}, Q'_{m+1}}$:}
		(See \cref{fig:Rerouting} (right).)
		Set $\ell := \ell_{m+1}$.
		If $Q'_m$ is at distance at least $d$ from $Q^\ell_1$, then we let $\defnm{P'_\ell}$ be an $X$--$Y$~path through $Q^\ell_1 \cup G[B_\ell] \cup b_\ell P_\ell$. 
		Since $Q'_m$ avoids $B'_\ell \supseteq B_G(B_\ell,d)$, we have $d_G(Q'_m, P'_\ell) \geq d$. Hence, $\{Q'_m\} \cup \{P'_{\ell_i} : i \leq m+1\} \cup \{P''_j : j \in [k]\setminus \{\ell_1, \dots, \ell_{m+1}\}\}$ is a collection of $k+1$ disjoint $X$--$Y$ paths pairwise at distance at least~$d$ where $P''_j$ is some $X$--$Y$ path in $P_jb'_j \cup G[B_j] \cup b_jP_j$ and $b'_j$ is the first vertex of $P_j$ that is contained in $B_j$.
		Thus, these paths are as in~\ref{itm:cor:CoarseMengerFor2:Paths}. In this case, we terminate the construction and conclude the proof. 
		
		Otherwise, let $q$ be the first vertex of $Q'_m$ that is at distance less than~$d$ from $Q^\ell_1 \cup Q^\ell_2$, and let $W'$ be a shortest $Q'_m$--$(Q^\ell_1 \cup Q^\ell_2)$ path (of length less than~$d$), and let $i \in [2]$ such that $W'$ ends in $Q^\ell_i$. If $i = 1$, then we let \defn{$P'_\ell$} be the (unique) path in $Q^\ell_1 \cup W' \cup Q'_m$ that starts in the first vertex of $Q^\ell_1$ and ends in the last vertex of~$Q'_m$. In particular, $P'_\ell$ is a path between $X$ and $Y$. Further, we let \defn{$Q'_{m+1}$} be an $X$--$Y$ path through $Q^\ell_2 \cup G[B_\ell] \cup b_\ell P_\ell$. 
		If $i=2$, then we let \defn{$Q'_{m+1}$} be the (unique) path in $Q^\ell_2 \cup W' \cup Q'_m$ that starts in the first vertex of $Q^\ell_2$ and ends in the last vertex of~$Q'_m$, and we let \defn{$P'_\ell$} be an $X$--$Y$ path through $Q^\ell_1 \cup G[B_\ell] \cup b_\ell P_\ell$. 
		
		Since $d_G(Q^\ell_1, Q^\ell_2) \geq 2d$ and $Q'_m$ avoids $B'_\ell$, we have $d_G(P'_\ell,Q'_{m+1}) \geq d$. 
		Moreover, $\{P'_{\ell_i} : i \leq m+1\} \cup \{P_j : j \in [k]\setminus \{\ell_1, \dots, \ell_{m+1}\}\}$ is still a collection of $X$--$Y$ paths that are pairwise at distance at least~$d$ and they are all at distance at least~$d$ from $Q'_{m+1}$, except for possibly $P_{\ell_{m+2}}$. 
		\medskip
		
		If the construction of the paths $P'_{\ell_m}, Q'_m$ does not terminate for some $m < n$ (in which case we already found our desired $X$--$Y$ paths), we obtain a path $Q'_n$ and paths $P'_{\ell_i}$, for $i \leq n$. Since $P_{\ell_n}$ is a leaf of~$H$ and $H$ has no parallel edges, we have $j_{\ell_n} = \ell_n$, and hence $Q'_n$ avoids all $B_G(P_j, r)\setminus A^r_j$ for $j \in [k]\setminus \{\ell_1, \dots, \ell_{n}\}$. Therefore, $Q'_n$ together with the paths $P'_{\ell_i}$, for $i \leq n$, and the paths $P''_j$, for $j \in [k]\setminus \{\ell_1, \dots, \ell_n\}$, form a collection of $k+1$ disjoint $X$--$Y$ paths that are pairwise at distance at least~$d$ where $P''_j$ is some $X$--$Y$ path in $P_jb'_j \cup G[B_j] \cup b_jP_j$ and $b'_j$ is the first vertex of $P_j$ that is contained in $B_j$.
	\end{proof}

	\section{Concluding remarks} \label{sec:ConcludingRemarks}

	The proof of \cref{main:CoarseMenger:BoundedCycleSpace} becomes significantly simpler, and the statement of \ref{itm:CoarseMengerConj:Set} can be strengthened, if we require the sets $X, Y$ to be connected.
	
	\begin{theorem} \label{main:CoarseMenger:BoundedCycleSpace:Connected}
		Set $c(\kappa, d,k) := 129\cdot\left(\left(d+\kappatwo\right)(k-2)+d+1\right)$. For every $k, d \in \N$, for every graph~$G$ whose cycle space is generated by cycles of length at most $\kappa \in \N$, and for every connected $X, Y \subseteq V(G)$, at least one of the following statements holds:
		\begin{enumerate}[label=\rm{(\roman*)}]
			\item \label{itm:CoarseMenger:CS:Connected:Paths} There are $k$ disjoint $X$--$Y$ paths in $G$ that are pairwise at distance at least $d$ from each other.
			\item \label{itm:CoarseMenger:CS:Connected:Ball} There is a vertex $z \in V(G)$ such that $B_G(z, c(\kappa,d,k))$ intersects every $X$--$Y$ path.
		\end{enumerate}
	\end{theorem}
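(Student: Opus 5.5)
We argue by induction on $k$. The idea is that connectedness of $X$ and $Y$ lets us handle the rerouting of the previous section with a single ball. The cases $k=1$ and $k=2$ follow from \cref{thm:CoarseMengerFor2}; note that $c(\kappa,d,2)=129(d+1)\geq 129d$. For the induction step we set $d' := d+\kappatwo$ and observe that $c(\kappa,d,k+1) = c(\kappa,d',k)$ up to the additive constant. This suggests the plan: apply the induction hypothesis with distance parameter $d'$ (or a comparable value) to get $k$ paths that are far apart, and then add one more path using only \cref{lem:kappa/2NhoodIsConnected}. Concretely, suppose that no ball of radius $c(\kappa,d,k+1)$ meets every $X$--$Y$ path. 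Then the ball outcome at level $k$ fails as well, since its radius is at most this one. Hence the induction hypothesis gives $k$ disjoint $X$--$Y$ paths $P_1,\dots,P_k$ that are pairwise at distance at least $d'$.

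Next we exploit connectivity. Let $P_k$ be one of the paths, say with a suitable extremal choice. Consider $Y^* := X\cup Y\cup \bigcup_{i<k} P_i$, which is connected because $X$ and $Y$ are connected and every $P_i$ joins them. Look at the component $C$ of $G-Y^*$ containing the interior of $P_k$; the endpoints of $P_k$ lie in $X$ and $Y$, so strictly speaking $C$ contains $P_k$ minus its ends. By \cref{lem:kappa/2NhoodIsConnected}, the set $N:=C[\partial_G C,\kappatwo]$ is connected. Put $X_1 := N\cap B_G(X,1)$ and $Y_1 := N\cap B_G(Y,1)$. We now apply \cref{thm:CoarseMengerFor2} inside the subgraph of $G$ obtained by deleting everything within distance $\kappatwo+d$ of $\bigcup_{i<k}P_i$, with $X$ and $Y$ as the terminal sets. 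Either we obtain two $X$--$Y$ paths at distance at least $d$ that are also far from $P_1,\dots,P_{k-1}$, which together with those paths yields $k+1$ paths, or a ball of radius $129(d+1)$ meets all such paths.

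In the second case we want to enlarge this ball to one of radius about $129((d+\kappatwo)(k-1)+d+1)$ that meets every $X$--$Y$ path in $G$. The reason is that any $X$--$Y$ path avoiding it must pass close to some $P_i$. Since $X$ and $Y$ are connected, we can then reroute it along $X$, $Y$ and the $\kappatwo$-neighbourhoods; this produces a path far from the remaining $k-1$ paths, and induction on the number of $P_i$ involved, each step costing $d+\kappatwo$ in the radius, gives a contradiction.

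The main obstacle is this last rerouting step. We must show that a path escaping the small ball can be exchanged with some $P_i$ so as to produce $k$ paths that are pairwise far apart and avoid a ball of the appropriate level $c(\kappa,d,k)$. What we will try first is to run the induction not on $G$ but on the graph obtained by contracting $X$ and $Y$ to single vertices. In that graph, being an $X$--$Y$ path and being separated by one ball are unaffected, since $X$ and $Y$ are connected. Moreover, every component of the complement of a connected set has a connected $\kappatwo$-boundary by \cref{lem:kappa/2NhoodIsConnected}. Then the separating ball grows by exactly $d+\kappatwo$ per additional path, which matches the linear formula for $c$.
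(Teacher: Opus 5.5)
Your proposal has a genuine gap. The step that should produce either the $(k+1)$-st path or the single ball is exactly the ``rerouting step'' you flag as the main obstacle, and it is never carried out.

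\emph{The subgraph step.} Applying \cref{thm:CoarseMengerFor2} in the subgraph $G'$ obtained by deleting a neighbourhood of $P_1\cup\dots\cup P_{k-1}$ does not settle it. In the first outcome you only get $d_{G'}(Q_1,Q_2)\ge d$. Deleting vertices can only increase distances, so this gives no lower bound on $d_G(Q_1,Q_2)$. In the second outcome the ball meets only the $X$--$Y$ paths of $G'$. Upgrading it to a ball that meets all $X$--$Y$ paths of $G$ is precisely the unproved claim. The component $C$, the set $N$ and the sets $X_1,Y_1$ you introduce are never used.

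\emph{The induction bookkeeping.} Write $s:=\lfloor(\kappa-2)/2\rfloor$ and $d'=d+s$. Then $c(\kappa,d',k)-c(\kappa,d,k+1)=129\,(s(k-2)-d)$, which is positive for large $k$. So the absence of a ball of radius $c(\kappa,d,k+1)$ does not exclude the ball outcome at level $k$.

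\emph{The contraction fallback.} It destroys the problem. After contracting $X,Y$ to vertices $x,y$, all $x$--$y$ paths share their endvertices, so none of them are disjoint. Moreover $B(x,0)$ already meets all of them. A ball around $x$ corresponds in $G$ to a neighbourhood of the whole, possibly unbounded, set $X$.

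The missing idea is that no induction is needed. Connectivity of $X$ and $Y$ lets you build all $k$ paths from a \emph{single} application of \cref{thm:CoarseMengerFor2} with the large parameter $D:=(d+s)(k-2)+d+1$; this is where the linear formula $c=129D$ comes from.
\begin{enumerate}
\item If no ball of radius $129D$ works, you get $X$--$Y$ paths $P=p_0\dots p_n$ and $Q=q_0\dots q_m$ with $d_G(P,Q)\ge D$.
\item Join $p_0$ to $q_0$ by a path $W_X$ in $G[X]$, and $p_n$ to $q_m$ by a path $W_Y$ in $G[Y]$.
\item For $i\in[k-1]$ set $d_i:=di+s(i-1)$, so that $D=d_{k-1}+1$. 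Let $w^i_X,w^i_Y$ be the last vertices of $W_X,W_Y$ at distance exactly $d_i+1$ from $P$.
\item Then $w^i_XW_Xq_0\cup Q\cup q_mW_Yw^i_Y$ lies in one component $C_i$ of $G-B_G(P,d_i)$, and $w^i_X,w^i_Y\in\partial_G C_i$.
\item \cref{lem:kappa/2NhoodIsConnected}, applied to the connected set $B_G(P,d_i)$, joins $w^i_X$ and $w^i_Y$ inside $C_i[\partial_G C_i,s]$. This yields an $X$--$Y$ path $P_i$ with $d_i+1\le d_G(v,P)\le d_i+1+s$ for all $v\in V(P_i)$.
\item Since $d_{i+1}-d_i-s=d$, the paths $P=P_0,P_1,\dots,P_{k-1}$ are pairwise at distance at least $d$.
\end{enumerate}
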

	
	\begin{proof}
		Set $D := \left(d+\kappatwo\right)(k-2)+d+1$. By \cref{thm:CoarseMengerFor2}, there exists either a vertex $z \in V(G)$ such that $B_G(z, 129\cdot D)$ intersects every $X$--$Y$ path, or there are two $X$--$Y$ paths $P, Q$ such that $d_G(P, Q) \geq D$. Since the vertex~$z$ in the former case is as in~\ref{itm:CoarseMenger:CS:Connected:Ball}, we may assume the latter. 
		
		Let $P = p_0 \dots p_n$ and $Q = q_0 \dots q_m$, and set $P_0 := P$. Since $X$ and $Y$ are connected, there is a $p_0$--$q_0$ path $W_X$ in $G[X]$ and a $p_n$--$q_m$ path $W_Y$ in $G[Y]$. 
		For every $i \leq k-1$, set $d_i := d\cdot i+\kappatwo(i-1)$ and let $w^i_X$, $w^i_Y$ be the last vertices of $W_X$ and $W_Y$, respectively, with $d_G(w^i_A, P) = d_i+1$ for $A \in \{X,Y\}$. 
		Then $w^i_XW_Xq_0 \cup Q \cup q_mW_Yw^i_Y$ is connected and avoids $B_G(P, d_i)$. Hence, there is a component $C_i$ of $G-B_G(P, d_i)$ containing it.
		By \cref{lem:kappa/2NhoodIsConnected}, $C_i\left[\partial_G C_i, \kappatwo\right]$ is connected, and hence, since $w^i_X, w^i_Y \in \partial_G C_i$, there exists a $w^i_X$--$w^i_Y$ path $P'_i$ in $C_i\left[\partial_G C_i, \kappatwo\right]$. As $w^i_X \in X$ and $w^i_Y \in Y$, the path $P'_i$ contains an $X$--$Y$~path $P_i$. Since clearly $d_G(P_i, P_j) \geq (d_j+1) - \left(d_i+1 + \kappatwo\right) \geq d\cdot(j-i) + \kappatwo(j-i) - \kappatwo \geq d$ for $j > i$ by construction, the paths $P_0, \dots, P_{k-1}$ are as in~\ref{itm:CoarseMenger:CS:Connected:Paths}.
	\end{proof}
	
	One motivation for the coarse Menger conjecture (\cref{conj:CoarseMenger}) came from the hope that it might be useful for proving the `fat minor conjecture' of Georgakopoulos and Papasoglu~\cite{GP23}:\footnote{See e.g.\ \cite{GP23} for definitions.}
	
	\begin{conjecture}[Fat minor conjecture] \label{conj:FatMinor}
		For every graph $X$ there exists a function $f: \N \to \N^2$ such that the following holds for every graph $G$ and $K \in \N$: If $G$ does not contain $X$ as a $K$-fat minor, then $G$ is $f(K)$-quasi-isometric to a graph with no $X$ minor.
	\end{conjecture}
	
	\cref{conj:FatMinor} has been disproved by Davies, Hickingbotham, Illingworth, and McCarty~\cite{DHIM}. Since then, further counterexamples have been obtained by Albrechtsen, Distel, and Georgakopoulos~\cite{ADGSmallCounterexFatMinorConj} and by Albrechtsen and Davies~\cite{ADWeakCounterex}, showing that \cref{conj:FatMinor} fails even for certain small graphs $X$ such as $K_{2,2,2}$~\cite{ADGSmallCounterexFatMinorConj}, and that it fails even if we weaken its conclusion and only ask for a quasi-isometry to some graph excluding some (possibly bigger) graph~$X'$ (depending only on~$X$) as a minor~\cite{ADWeakCounterex}. 
	
	All these counterexamples build on a construction of Nguyen, Scott, and Seymour~\cite{CoarseMengerCounterex,NewCounterexToCoarseMenger} which they used to disprove the coarse Menger conjecture (\cref{conj:CoarseMenger}). Consequently, the cycle spaces of 
	all graphs $G$ that were used to disprove \cref{conj:FatMinor} are not generated by cycles of bounded length. Hence, the following special case of \cref{conj:FatMinor} is still open:
	
	\begin{conjecture} \label{conj:FatMinors:CS}
		For every graph $X$ there exists a function $f: \N^2 \to \N^2$ such that the following holds for every graph $G$ whose cycle space is generated by cycles of length at most $\kappa$: If $G$ does not contain $X$ as a $K$-fat minor, for some $K \in \N$,  then $G$ is $f(\kappa, K)$-quasi-isometric to a graph with no $X$ minor.
	\end{conjecture}
	
	A particularly interesting special case of \cref{conj:FatMinors:CS} would be the coarse Kuratowski conjecture: Is it true that if a graph $G$ does not contain $K_5$ and $K_{3,3}$ as a $K$-fat minor and its cycle space is generated by cycles of length at most~$\kappa$, then $G$ is $f(\kappa, K)$-quasi-isometric to a planar graph?
	
	It is know that \cref{conj:FatMinors:CS} is true in the special case of locally finite Cayley graphs of finitely presented groups: Albrechtsen and Hamann~\cite{AHAsymptoticGrids} proved \cref{conj:FatMinors:CS} for all planar graphs~$X$ if $G$ is locally finite and quasi-transitive, and MacManus~\cite{M24+} proved \cref{conj:FatMinors:CS}
	for all non-planar graphs~$X$ if $G$ is a locally finite Cayley graph of a finitely presented group.

	\section*{Acknowledgements}
	
	We thank Agelos Georgakopoulos for drawing our attention to the problem of coarse Menger for graphs whose cycle space is generated by cycles of bounded length.
	We thank James Davies for his thorough input on the introduction.

	\printbibliography
	
\end{document}